\documentclass[11pt,leqno]{article}

\usepackage[active]{srcltx}
\usepackage{amsfonts,latexsym,amsmath,amssymb,amsthm}
\usepackage{dsfont}
\usepackage{fullpage}
\newtheorem{theorem}{Theorem}[section]
\newtheorem{lemma}[theorem]{Lemma}

\newtheorem{proposition}[theorem]{Proposition}
\newtheorem{corollary}[theorem]{Corollary}
\newtheorem{remark}[theorem]{Remark}

\theoremstyle{remark}

\newtheorem*{note*}{Note}

\numberwithin{equation}{section}

\makeatletter
\newcommand{\vol}{\mathrm{vol}}
\newcommand{\ls}{\leqslant}
\newcommand{\gr}{\geqslant}
\makeatother

\usepackage{hyperref}

\begin{document}
\small

\title{\bf On the maximal perimeter of isotropic log-concave probability measures}

\author{Silouanos Brazitikos, Apostolos Giannopoulos, Antonios Hmadi\\ and Natalia Tziotziou}

\date{}

\maketitle

\begin{abstract}\footnotesize 
We study the maximal perimeter constant of isotropic log-concave probability measures on $\mathbb{R}^n$.
For a measure $\mu$, this quantity, denoted by $\Gamma(\mu)$, is defined as the supremum of the $\mu$-perimeter over all convex bodies and measures the largest possible boundary contribution of convex sets with respect to $\mu$.
Let
$$\Gamma_n := \sup\{\Gamma(\mu) : \mu \text{ is an isotropic log-concave probability measure on } \mathbb{R}^n\}.$$
We prove that $\Gamma_n \ls Cn^{3/2}$, where $C>0$ is an absolute constant. 
This result improves the previously known $O(n^2)$ upper bound. 
Under additional structural assumptions, we obtain sharp linear bounds of order $O(n)$.
\end{abstract}

\section{Introduction}\label{section:1}

We study extremal perimeter properties of high-dimensional log-concave probability measures. 
Our main result establishes an $O(n^{3/2})$ upper bound for the maximal perimeter constant of isotropic log-concave probability measures on $\mathbb{R}^n$, improving the previously best known quadratic bound.

For a probability measure $\mu$ on $\mathbb{R}^n$ and a convex body $A \subset \mathbb{R}^n$, the $\mu$--perimeter of $A$ is defined by
$$\mu^+(\partial A)=\liminf_{\epsilon \to 0^+}\frac{\mu\big((A+\epsilon B_2^n)\setminus A\big)}{\epsilon},$$
where $B_2^n$ denotes the Euclidean unit ball.
This notion coincides with the Minkowski content of $\partial A$ with respect to $\mu$ and quantifies the first-order variation of the $\mu$-measure of $A$ under infinitesimal Euclidean enlargements.
The maximal perimeter constant of $\mu$ is then defined as 
$$\Gamma(\mu)=\sup\big\{\mu^+(\partial A):\ A \subset \mathbb{R}^n \text{ is a convex body}\big\},$$
and measures the largest possible boundary contribution among all convex bodies with respect to $\mu$.

\medskip 

\noindent {\bf \S 1.1. Main results.} 
A probability measure $\mu$ on $\mathbb{R}^n$ is log-concave if it admits a density $f$ satisfying
$$\ln f(\tau x + (1-\tau)y) \gr \tau \ln f(x)+(1-\tau)\ln f(y)\quad \text{for all } x,y\in\mathbb{R}^n, \ \tau\in[0,1].$$
The measure $\mu$ is called isotropic if its barycenter is at the origin and its covariance matrix is the identity.  

In this work, we investigate the growth rate of
$$\Gamma_n:=\sup\{\Gamma(\mu):\mu\;\text{is an isotropic log-concave probability measure on}\;\mathbb{R}^n\}.$$
A basic example is provided by the uniform measure on the cube $Q_n$ in isotropic position, for which $\Gamma(\mu_{Q_n})\approx n$, where $\mu_{Q_n}$ is the 
associated isotropic measure. More generally, in Section~\ref{section:5}  we show that for any isotropic convex body $K\subset\mathbb{R}^n$, the maximal perimeter constant of the uniform measure on $K$ is of the same order as its surface area $S(K)$ (see Section~\S 5.1). 
In particular, when $K = Q_n$, this quantity is of order $n$.

Prior to the present work, the best known general upper bound was quadratic, namely $\Gamma_n \ls C n^2$, due to Livshyts \cite{Livshyts-2021}.
Our main result improves the bound to order $n^{3/2}$ in full generality.

\begin{theorem}\label{th:main-upper-bound}
Let $\mu$ be an isotropic log-concave probability measure on $\mathbb{R}^n$. 
Then
$$\Gamma(\mu)\ls Cn^{3/2},$$
where $C>0$ is an absolute constant.
\end{theorem}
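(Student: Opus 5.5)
The plan is to bound the perimeter of an arbitrary convex body $A$ by a Crofton-type averaging over directions. This splits the problem into a factor $\sqrt{n}$ coming from the averaging and a factor $n$ coming from a one-dimensional estimate along lines. Write $\mu=f\,dx$ with $f$ log-concave. For a convex body $A$ one has $\mu^+(\partial A)=\int_{\partial A}f\,d\mathcal H^{n-1}$. I use the identity
$$|\nu|=c_n\int_{S^{n-1}}|\langle\nu,\theta\rangle|\,d\sigma(\theta),\qquad c_n\approx\sqrt n,$$
which holds for every $\nu\in\mathbb R^n$. Applied to the outer unit normal $\nu_A(x)$ and combined with Fubini, it gives
$$\mu^+(\partial A)=c_n\int_{S^{n-1}}\int_{\partial A}f(x)\,|\langle\nu_A(x),\theta\rangle|\,d\mathcal H^{n-1}(x)\,d\sigma(\theta).$$
For fixed $\theta$, the inner integral is the integral over $y\in\theta^\perp$ of the sum of the values of $f$ at the points where the line $y+\mathbb R\theta$ crosses $\partial A$; this is the area formula for the projection onto $\theta^\perp$. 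By convexity of $A$, each such line meets $\partial A$ in at most two points, apart from a null set of $y$. Hence
$$\int_{\partial A}f\,|\langle\nu_A,\theta\rangle|\,d\mathcal H^{n-1}\ls 2\int_{\theta^\perp}F_\theta(y)\,dy,\qquad F_\theta(y):=\sup_{t\in\mathbb R}f(y+t\theta).$$
This bound no longer depends on $A$. The theorem therefore reduces to showing that, on average over $\theta\in S^{n-1}$ (or even for every $\theta$),
$$\int_{\theta^\perp}F_\theta(y)\,dy\ls Cn.$$
Combined with $c_n\approx\sqrt n$, this yields $\Gamma(\mu)\ls Cn^{3/2}$.

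For the reduced inequality I proceed fibrewise. Let $\pi_\theta(y)=\int_{\mathbb R}f(y+t\theta)\,dt$ be the marginal of $\mu$ on $\theta^\perp$. For each $y$ with $\pi_\theta(y)>0$, the function $g_y(t)=f(y+t\theta)/\pi_\theta(y)$ is a log-concave probability density on $\mathbb R$. By the standard one-dimensional estimate $\|g\|_\infty\approx 1/\sigma(g)$, where $\sigma(g)$ is the standard deviation, we get
$$\int_{\theta^\perp}F_\theta\ls C\int_{\theta^\perp}\frac{\pi_\theta(y)}{\sigma_\theta(y)}\,dy=C\,\mathbb E\Big[\frac1{\sigma_\theta(P_{\theta^\perp}X)}\Big],$$
where $X\sim\mu$ and $\sigma_\theta(y)$ is the conditional standard deviation of $\langle X,\theta\rangle$ given $P_{\theta^\perp}X=y$. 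Isotropicity gives $\mathbb E\,\sigma_\theta^2\ls\mathrm{Var}\langle X,\theta\rangle=1$. What is needed, however, is a bound on the negative moment $\mathbb E\,\sigma_\theta^{-1}$.

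This negative-moment estimate is the main obstacle, since conditional variances may degenerate near the boundary of the support. My first attempt will use log-concavity of the whole picture through the function $\Phi(y)=\int g$-type quantities. By Prékopa–Leindler, both $\pi_\theta$ and $F_\theta$ are log-concave on $\theta^\perp$, and they satisfy the pointwise relation $\pi_\theta(y)\gr F_\theta(y)\,w_\theta(y)/C$, where $w_\theta(y)$ is the length of the chord on which $f\gr F_\theta(y)/e$. The key step is to show that the set of $y$ on which $w_\theta(y)\ls \delta$ carries $\mu$-mass at most $C n\delta$. Near the "edge" of the fibration, $w_\theta$ behaves like a concave function raised to a power of order at most $n$ (a Grünbaum/Borell-type argument, using the $1/n$-concavity obtained by viewing $f$ as a section of a convex body in dimension $n+1$, or in $\mathbb R^{n+k}$ with $k\to\infty$). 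From the distribution bound $\mu(w_\theta\ls\delta)\ls Cn\delta$ one gets $\mathbb E[1/w_\theta]\ls Cn$, with $\sigma_\theta\gtrsim w_\theta$, and hence the required bound $\int F_\theta\ls Cn$. If a uniform-in-$\theta$ estimate fails, I would fall back on averaging in $\theta$, which the Crofton formula allows for free.
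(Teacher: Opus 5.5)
Your Crofton step is correct, but it reduces the theorem to a stronger statement that your sketch does not prove.

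\textbf{What the reduction asks for.} Your bound $2\int_{\theta^\perp}F_\theta$ no longer depends on $A$. Averaging it over $\theta$ and applying Cauchy's projection formula to the level sets $\{f\ge u\}$ gives
$$2c_n\int_{S^{n-1}}\int_{\theta^\perp}F_\theta(y)\,dy\,d\sigma(\theta)=\int_0^{\|f\|_\infty}S(\{f\ge u\})\,du=\int_0^\infty e^{-s}\|f\|_\infty S(R_s(\mu))\,ds ,$$
which is the total variation of $f$. So even your fallback (averaging in $\theta$) needs this integral to be $O(n^{3/2})$, and your per-direction claim is stronger still. Both are stronger than the theorem.

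The bound in Theorem~\ref{th:livshyts-1} is in effect a bound for this integral, and in the isotropic case it gives only $O(n^2)$. The paper controls only the tail $s\ge 6n$ of it, getting at most $3n$ via Lemma~\ref{lem:inradius} and \eqref{eq:surface-inradius}. For the levels $s\le 6n$ it deliberately does not bound $S(R_s(\mu))$. Instead it keeps $A$:
\begin{itemize}
\item It applies Lemma~\ref{lem:center-general-1} to $A\cap R_{6n}(\mu)$, dilating about the incenter $x_A$. The loss is $\ln(\|f|_A\|_\infty/f(x_A))\ls 6n$ because $x_A\in R_{6n}(\mu)$.
\item It then uses $\mu(A)\ls w_A\ls 2\sqrt n\,r_A$, from Lemma~\ref{lem:measure-width} and Theorem~\ref{th:steinhagen}.
\end{itemize}
By discarding $A$ at the first step, you lose exactly the inradius/width information that yields the $\sqrt n$.

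\textbf{The argument for the reduced inequality does not close.} Passing through $\sigma_\theta$ and $w_\theta$ only restates the problem. For a log-concave fibre one has $\pi_\theta\ls 2F_\theta w_\theta$ as well as $F_\theta\ls e\,\pi_\theta/w_\theta$, so $\mathbb{E}[1/w_\theta]$ and $\int_{\theta^\perp}F_\theta$ agree up to absolute constants. The step you flag as the main obstacle is therefore the whole problem, and it has two specific failures.

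\begin{itemize}
\item The tail estimate $\mu(w_\theta\ls\delta)\ls Cn\delta$ is only a heuristic. A log-concave $f$ is not $1/n$-concave. Realizing it through a convex body in $\mathbb{R}^{n+k}$ produces exponents of order $n+k$, with $k\to\infty$. So no dimension-$n$ constant comes out of that picture.
\item Even if the tail estimate held, it would not give the negative moment bound. We have
$$\mathbb{E}\big[1/w_\theta\big]=\int_0^\infty\mu(w_\theta<1/u)\,du\ls\int_0^\infty\min(1,Cn/u)\,du=\infty .$$
For example, $w$ uniform on $[0,1/(Cn)]$ satisfies the linear tail bound yet has $\mathbb{E}[1/w]=\infty$. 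You would need a superlinear tail near $\delta=0$, with explicit dependence on $n$, and nothing in your sketch supplies it.
\end{itemize}
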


Together with the example of the cube, Theorem~\ref{th:main-upper-bound} yields
$$cn\ls \Gamma_n \ls Cn^{3/2}.$$

The estimate of Theorem~\ref{th:main-upper-bound} can be improved if one restricts attention to the symmetric maximal perimeter constant of $\mu$, defined by
$$\Gamma^{(s)}(\mu)=\sup\big\{\mu^+(\partial A):\ A \subset \mathbb{R}^n \text{ is a symmetric convex body}\big\}.$$

\begin{theorem}\label{th:symmetric-main-upper-bound}
Let $\Gamma_n^{(s)}:=\sup\{\Gamma^{(s)}(\mu):\mu\;\text{is an isotropic log-concave probability measure on}\;\mathbb{R}^n\}$.
Then
$$\Gamma_n^{(s)}\ls 4n.$$
\end{theorem}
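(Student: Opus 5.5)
The plan is to bound $\mu^+(\partial A)$ from above by a divergence-theorem identity, and to use the symmetry of $A$ to control the support function where it appears. Let $A$ be a symmetric convex body, $f$ the log-concave density of $\mu$, and $\nu(x)$ the outer unit normal at $x\in\partial A$. Away from a null set, $f$ is continuous on $\partial A$, so
$$\mu^+(\partial A)=\int_{\partial A} f\,d\sigma .$$
The basic tool is the divergence theorem applied to a vector field of the form $V(x)f(x)$:
$$\int_{\partial A} f\,\langle V,\nu\rangle\,d\sigma=\int_A \bigl(f\,{\rm div}V+\langle V,\nabla f\rangle\bigr)\,dx .$$
With $V(x)=x$ the left-hand side becomes $\int_{\partial A} f\,h_A(\nu)\,d\sigma$ and the right-hand side becomes $\int_A (nf+\langle x,\nabla f\rangle)$. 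Log-concavity gives $\langle x,\nabla f(x)\rangle\ls f(x)\bigl(\ln f(x)-\ln f(0)\bigr)$-type control along rays. Since $A$ is symmetric, the function $t\mapsto \mu(tA)$ is well behaved, and one can aim for $\int_A(nf+\langle x,\nabla f\rangle)\ls n\,\mu(A)\ls n$.

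This identity only controls the boundary integral weighted by $h_A(\nu)$. So the key step is to compare $f$ with $f\,h_A(\nu)$ on the part of the boundary where $h_A(\nu)$ is small. I would split $\partial A=\partial_1\cup\partial_2$ according to whether $h_A(\nu(x))\gr \tfrac14$ or $h_A(\nu(x))<\tfrac14$. On $\partial_1$, the identity above gives
$$\int_{\partial_1} f\,d\sigma\ls 4\int_{\partial A} f\,h_A(\nu)\,d\sigma\ls 4n\,\mu(A)-(\text{nonnegative term}),$$
and one hopes the leftover term is what pays for $\partial_2$. On $\partial_2$, symmetry says that $A$ lies in the slab $\{|\langle y,\nu\rangle|\ls h_A(\nu)\}$ of width less than $1/2$. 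Here I would use isotropicity. Every one-dimensional marginal of $\mu$ is an isotropic log-concave density on $\mathbb{R}$ and hence bounded by $1$. Slicing in the direction $\nu$ should then bound the boundary contribution of such a flat face by the mass of $\mu$ in a neighbourhood of the slab, which is $O(1)$ per unit of width.

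Concretely, I would try to replace the field $x$ by a field adapted to the symmetric body, for instance $V=\nabla(\|x\|_A^2/2)$ or $V(x)=x/\max\{\|x\|_A,\cdot\}$ truncated. The aim is that $\langle V,\nu\rangle\gr c$ on all of $\partial A$ while ${\rm div}V\ls n/c$ on $A$, with the term $\langle V,\nabla f\rangle$ handled through the barycenter condition $\int x f=0$ and Cauchy–Schwarz with $\int |x|^2 f=n$. Tracking constants so that the final bound is exactly $4n$ would then be bookkeeping, with the factor $4$ coming from the threshold $\tfrac14$ (or from $2\cdot 2$, one factor from each of the two symmetric faces).

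The main obstacle is the part $\partial_2$, where $A$ is very thin in some direction and $h_A(\nu)\to 0$. There the divergence identity with $V=x$ gives no information, and the whole argument must come from the one-dimensional bound $\|f_\nu\|_\infty\ls 1$ on marginals combined with symmetry, since a thin symmetric body has two nearly parallel faces at $\pm h_A(\nu)\nu$. The first thing I would try is to prove directly that
$$\int_{\{x\in\partial A:\ \nu(x)\approx\theta\}} f\,d\sigma\ls 2\sup_t f_\theta(t)\,\cdot\,(\text{angular measure}),$$
and then sum over directions. I am not certain this summation can be done without losing a $\sqrt n$ factor, and if it cannot, this step is where the argument fails.
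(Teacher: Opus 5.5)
Your proposal does not close, and it fails where you suspect: nothing controls $\partial_2$, and summing slab bounds over normal directions has no mechanism behind it. The root cause is that you normalize at the fixed scale $\tfrac14$ instead of at the scale of $A$. Since $A$ is symmetric, its inball is centered at the origin, so $r_AB_2^n\subseteq A$ and therefore $h_A(\nu(x))\gr r_A$ for every $x\in\partial A$. Splitting at $r_A$ makes $\partial_2$ empty, and your identity gives at once
\[
\int_{\partial A}f\,d\lambda\ls\frac{1}{r_A}\int_{\partial A}f\,h_A(\nu)\,d\lambda .
\]
The thinness of $A$ is then paid for by its mass, not by its boundary. The marginal bound $\|g_\xi\|_\infty\ls 1$ that you quote gives $\mu(A)\ls\mu(\{x:|\langle x,\xi\rangle|\ls h_A(\xi)\})\ls w_A(\xi)$ for every $\xi$, hence $\mu(A)\ls w_A=2r_A$. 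This is the missing idea. Bounding $\mu(A)\ls 1$ discards exactly the information that compensates a small inradius. Your alternative, a field $V$ with $\langle V,\nu\rangle\gr c$ on $\partial A$ and ${\rm div}\,V\ls n/c$ for an absolute $c$, cannot exist in general: with $f\equiv 1$ it would force $S(A)\ls n\vol_n(A)/c^2$, which is false for thin boxes. The right field is $V=x/r_A$, with the factor $1/r_A$ absorbed by $\mu(A)\ls 2r_A$.

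There is also an error in the interior term. The bound $\int_A(nf+\langle x,\nabla f\rangle)\,dx\ls n\mu(A)$ is not available for a merely centered density. It already fails for the centered one-sided exponential on $\mathbb{R}$ with $A=[-a,a]$, $a<1$, where $\int_A xf'(x)\,dx>0$. Your own inequality $\langle x,\nabla f(x)\rangle\ls f(x)\ln(f(x)/f(0))$, combined with Fradelizi's bound $\|f\|_\infty\ls e^nf(0)$, gives only $2n\mu(A)$. That is the differentiated form of the dilation estimate $\mu((1+\delta)A)\ls e^{2n\delta}\mu(A)$. With this correction your threshold $\tfrac14$ would give $8n$ on $\partial_1$ alone, so the plan as set up cannot reach $4n$.

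The paper's proof is precisely the repaired version of your argument, stated without the divergence theorem. First, $\mu(A+\epsilon B_2^n)\ls\mu((1+\epsilon/r_A)A)\ls e^{2n\epsilon/r_A}\mu(A)$ gives $\mu^+(\partial A)\ls 2n\mu(A)/r_A$. Then $\mu(A)\ls w_A=2r_A$ yields $4n$. So $4=2\cdot 2$: one factor from the centered dilation estimate, one from $w_A=2r_A$. Working with dilations also avoids justifying a divergence identity for a log-concave $f$ that may be discontinuous on the boundary of its support.
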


In Section~\ref{section:5} we further show that better bounds can be obtained under additional structural assumptions. 
Specifically, we prove linear $O(n)$ bounds for several important classes of isotropic geometric log-concave measures, 
including uniform measures on convex bodies, $1$-symmetric measures and measures with homogeneous level sets. These cases are already mentioned
in \cite{Livshyts-2021}. Our approach allows us to obtain sharper bounds in some of these cases and applies to more general classes:

\smallskip 

(i) In Proposition~\ref{prop:main-estimate-body-case} we show that if $K$ is an isotropic convex body in $\mathbb{R}^n$ and $\mu_K$ denotes 
the associated isotropic probability measure with density $L_K^n\mathds{1}_{K/L_K}$ where $L_K$ is the isotropic constant of $K$, then
$$\Gamma(\mu_K)=L_KS(K),$$
where $S(K)$ is the surface area of $K$. As a consequence, $\Gamma(\mu_K)\ls \sqrt{\frac{n}{n+2}}\,n$. This inequality is sharp: we check 
that equality holds if $K$ is an isotropic regular simplex.

\smallskip 

(ii) In Theorem~\ref{th:unconditional} we prove that if $\mu$ is a $1$-unconditional isotropic log-concave probability measure,
meaning that its density $f$ satisfies $f(x_1,\ldots,x_n)=f(\epsilon_1 x_1,\ldots,\epsilon_n x_n)$ for all choices of signs $\epsilon_i\in\{-1,1\}$, then
$$\Gamma(\mu)\ls\sqrt{2}n.$$
The proof is independent of the methods in \cite{Livshyts-2021} and covers the case of $1$-symmetric measures which is strictly smaller.

\smallskip 

(iii) In Theorem~\ref{thm:1d_isotropic_sharp} we show that if $\mu$ is an isotropic log-concave probability measure on $\mathbb{R}$, then
$$\Gamma(\mu)\ls 2.$$
Moreover, the constant $2$ is sharp: there exist isotropic log-concave probability measures $\mu$ with
$\Gamma(\mu)$ arbitrarily close to $2$, with one-sided exponentials being extremal.

\smallskip 

(iv) In Theorem~\ref{thm:product-sharp} we prove that if $\mu=\mu_1\otimes\cdots\otimes\mu_n$ is a product probability measure on $\mathbb{R}^n$,
where each $\mu_k$ has a density $g_k\in L^\infty(\mathbb{R})$, then
$$\Gamma(\mu)\ \ls\ 2\sum_{k=1}^n \|g_k\|_\infty.$$
Moreover, the constant $2$ is optimal. In particular, if each $\mu_k$ is an isotropic log-concave probability measure on $\mathbb{R}$, then 
$$\Gamma(\mu)\ls 2n.$$

\medskip 

It would be interesting to determine whether a linear bound $O(n)$ holds in full generality.

\medskip 

\noindent {\bf \S 1.2. Elements of the proof.} 
The proof of Theorem~\ref{th:symmetric-main-upper-bound} relies on two elementary yet robust ingredients. 
First, a dilation inequality for centered log-concave measures provides an upper bound on the perimeter of a convex body $A$ 
containing the origin in terms of its measure and its inradius. 
Second, for isotropic log-concave measures, the measure of any convex body admits a one-dimensional upper bound in terms of its minimal width. 
In the symmetric setting, the inball of $A$ is necessarily centered at the origin and the minimal width equals twice the inradius, which
immediately yields a linear bound for the symmetric maximal perimeter constant.

The proof of the general bound in Theorem~\ref{th:main-upper-bound} follows the same overarching strategy, supplemented with ideas introduced in \cite{Livshyts-2021}. 
A central role is played by the super-level sets 
$$R_t(\mu)=\{x\in\mathbb{R}^n: f(x)\gr e^{-t}\|f\|_{\infty}\}$$ 
of the density $f$ of $\mu$, which form an increasing family of bounded convex sets. 
For isotropic log-concave measures, these sets exhaust the measure rapidly: for $t$ of order $n$, they already carry almost full mass, 
while their inradius is bounded from below by an absolute constant. 
This structural information allows one to localize the perimeter problem to regions where the density is well controlled.

The starting point is a perimeter estimate for an arbitrary convex body $A$ in terms of its inradius and the ratio between the maximum of the density 
on $A$ and its value at the center of an inball. 
Combining this estimate with Steinhagen's inequality (Theorem~\ref{th:steinhagen}), which compares minimal width and inradius up to a factor $\sqrt{n}$, 
yields a bound of the form $$\sqrt{n}\left(n+\ln\left(\|f\big|_A\|_{\infty}/f(x_A)\right)\right),$$ where $x_A$ denotes the center of an inball of $A$.
When $A$ is contained in a fixed level set $R_{6n}(\mu)$, the logarithmic term is of order at most $n$, leading to an $O(n^{3/2})$ estimate.
To treat arbitrary convex bodies, the boundary is decomposed according to density levels: the contribution coming from outside $R_{6n}(\mu)$ 
is controlled by comparison with the boundaries of the level sets themselves.
Using the structural properties of $R_t(\mu)$, this contribution is shown to be linear in $n$.
Combining the two regimes yields the $O(n^{3/2})$ upper bound in full generality.

\medskip 

\noindent {\bf \S 1.3. Background and related work.} 
The Gaussian maximal perimeter constant arises naturally in several areas of high-dimensional probability, convex geometry and theoretical computer science.
Kwapie\'{n} and Mushtari posed the problem of determining the order of growth of $\Gamma(\mu)$ when $\mu=\gamma_n$ is the standard Gaussian measure on $\mathbb{R}^n$.
Simple arguments (see \cite{Ball-1993}) yield the bounds $\sqrt{\ln n}\lesssim \Gamma(\gamma_n)\lesssim \sqrt{n}$ with absolute implicit constants.
A breakthrough result of Ball \cite{Ball-1993} showed that
$$\Gamma(\gamma_n)\ls 4n^{1/4}.$$
This estimate was later shown to be sharp by Nazarov \cite{Nazarov-2003}, who proved that for every $n\times n$ symmetric positive definite matrix $T$, the Gaussian measure with density proportional to $\exp(-\langle Tx,x\rangle/2)$ has maximal perimeter of order $\sqrt{\|T\|_{\mathrm{HS}}}$, where $\|\cdot\|_{\mathrm{HS}}$ denotes Hilbert-Schmidt norm. 
Since $\|I_n\|_{{\rm HS}}=n^{1/2}$ for the identity matrix $I_n$, it follows that
\begin{equation}\label{eq:gaussian}
\Gamma(\gamma_n)\approx n^{1/4}.
\end{equation}
These results highlight a non-classical scaling behavior of Gaussian surface area in high dimensions, in contrast to the
linear growth observed for the uniform measure on the cube. They also motivate the investigation of analogous bounds beyond the Gaussian framework. 
A natural and substantially broader class of measures is given by log-concave probability measures, which arise ubiquitously in convex geometry and probability theory and include uniform measures on convex bodies as well as many exponential-type distributions.  

In a series of works, Livshyts studied extensions of Gaussian perimeter bounds to the log-concave setting.
In particular, for the probability measure $\mu_{p,n}$ with density proportional to $\exp(-|x|^p/p)$, $0<p<\infty$, it was shown in \cite{Livshyts-2013} that
$$\Gamma(\mu_{p,n})\approx C_p\,n^{\frac{3}{4}-\frac{1}{p}},$$
interpolating between Gaussian and uniform behavior. 
Subsequently, almost sharp upper bounds for general rotationally invariant log-concave measures were obtained in terms of the expectation and variance of the Euclidean norm in \cite{Livshyts-2014}, while maximal perimeter estimates for convex polytopes, with logarithmic dependence on the number of facets, were studied in \cite{Livshyts-2015}.

More recently, Livshyts \cite{Livshyts-2021} derived general lower bounds for $\Gamma(\mu)$ under mild regularity assumptions.
Specifically, if $\mu$ is absolutely continuous and satisfies
$\sqrt{\mathrm{Var}_{\mu}(|x|)}\ls \alpha\,\mathbb{E}_{\mu}(|x|)$ for some $\alpha\in [0,1)$,
then
\begin{equation}\label{eq:galyna-lower-bound}
\Gamma(\mu)\gr C(\alpha)\,
\frac{\sqrt{n}}{\sqrt[4]{\mathrm{Var}_{\mu}(|x|)}\,\sqrt{\mathbb{E}_{\mu}(|x|)}}.
\end{equation}
If $\mu$ is isotropic, then $\mathbb{E}_{\mu}(|x|)\approx\sqrt{n}$ and $\mathrm{Var}_{\mu}(|x|)\ls C$ for an absolute constant $C$, the latter estimate following from the recent resolution of the thin-shell conjecture by Klartag and Lehec \cite{Klartag-Lehec-2025b}.
Consequently,
\begin{equation}\label{eq:lower-bound-isotropic}
\Gamma(\mu)\gtrsim n^{1/4}
\end{equation}
for every isotropic log-concave probability measure on $\mathbb{R}^n$, up to an absolute constant. In view of \eqref{eq:gaussian}, this shows that within this class the Gaussian measure is essentially extremal on the lower end. 

Livshyts also proved in \cite{Livshyts-2021} that for every log-concave probability measure $\mu$ on $\mathbb{R}^n$ with density $f$, 
there exists a volume preserving transformation $T\in SL_n$ such that the push-forward measure $T_{\ast}\mu$ satisfies
\begin{equation}\label{eq:almost-n}
\Gamma(T_{\ast}\mu)\ls Cn\|f_T\|_{\infty}^{1/n}=Cn\|f\|_{\infty}^{1/n}
\end{equation}
for an absolute constant $C>0$, where $f_T(x)=f(T^{-1}(x))$ denotes the density of $T_{\ast}(\mu)$ and $\|f_T\|_{\infty}=\|f\|_{\infty}$. 
Note that $\Gamma(\mu)$ is not invariant under volume preserving transformations.

If $\mu$ is an isotropic log-concave probability measure, then $\|f\|_{\infty}^{1/n}$ coincides with the isotropic constant of $f$, 
which is bounded by an absolute constant (see Section~\ref{section:2}). 
Therefore, \eqref{eq:almost-n} implies that every isotropic log-concave probability measure $\mu$ on $\mathbb{R}^n$ admits 
a push-forward $T_{\ast}\mu$ such that 
$$\Gamma(T_{\ast}\mu)\ls Cn.$$ 
This result once again raises the natural question of whether $\Gamma_n$ grows linearly with the dimension.

\section{Notation and preliminaries}\label{section:2}

We work in $\mathbb{R}^n$, equipped with the standard inner product $\langle \cdot, \cdot \rangle$.  
The associated Euclidean norm is denoted by $|\cdot|$, the Euclidean unit ball by $B_2^n$, and the Euclidean unit sphere by $S^{n-1}$.  
Lebesgue measure in $\mathbb{R}^n$ is denoted by $\vol_n$, and we write $\omega_n = \vol_n(B_2^n)$ for the volume of the Euclidean unit ball.
We denote by $\sigma$ the rotationally invariant probability measure on $S^{n-1}$.

Throughout the paper, the symbols $C, c, c', c_1, c_2, \ldots$ denote absolute positive constants whose values may change from line to line.  
Whenever we write $a \approx b$, we mean that there exist absolute constants $c_1, c_2 > 0$ such that $c_1 a \ls b \ls c_2 a$. 
All absolute constants are independent of the dimension $n$.

\smallskip 

\noindent {\bf \S 2.1. Convex bodies.} 
A convex body in $\mathbb{R}^n$ is a compact convex set $K$ with nonempty interior.  
It is called symmetric if $K = -K$, and centered if its barycenter $\operatorname{bar}(K) = \frac{1}{\vol_n(K)} \int_K x\,dx$ is at the origin. 
For every convex body $K\subset \mathbb R^n$ we denote by $\overline{K}$ the homothetic copy of $K$ scaled to have unit volume,
namely $\overline{K}:=\vol_n(K)^{-1/n}K$.

Let $K$ be a convex body in $\mathbb{R}^n$ with $0\in {\rm int}(K)$. 
The radial function of $K$ is defined by $\varrho_K(x) = \max\{ t > 0 : t x \in K \}$ for all nonzero $x$, and the support function of $K$ is given by 
$h_K(x)=\max\{\langle x,y\rangle :y\in K \}$ for all $x\in\mathbb{R}^n$. 

If a probability measure $\mu$ on $\mathbb{R}^n$ admits a density $f$ with respect to Lebesgue measure, then
$$\mu^+(\partial K)=\int_{\partial K}f(x)\,d\lambda(x),$$
where $\lambda $ denotes the $(n-1)$-dimensional Hausdorff measure on $\partial K$. 
The surface area $S(K)$ of $K$ is defined by
$$S(K):=\lambda (\partial K).$$
A well-known inequality that will be useful in our study asserts that if $0\in {\rm int}(K)$ then 
\begin{equation}\label{eq:surface-inradius}
S(K)\ls\frac{n\vol_n(K)}{r(K)},
\end{equation}
where $r(K)$ is the inradius of $K$ with respect to the origin, that is, the largest $r>0$ such that $rB_2^n\subseteq K$.
For completeness, we sketch a proof. Setting $r=r(K)$, we write
\begin{align*}
S(K) &=\liminf_{\epsilon\to 0}\frac{\vol_n(K+\epsilon B_2^n)-\vol_n(K)}{\epsilon}\ls \liminf_{\epsilon\to 0}\frac{\vol_n(K+(\epsilon /r)K)-\vol_n(K)}{\epsilon}\\
&=\lim_{\epsilon\to 0}\frac{(1+\epsilon/r)^n-1}{\epsilon }\vol_n(K)=\frac{n\vol_n(K)}{r}.
\end{align*}
The (intrinsic) inradius of $K$ is defined by
$$r_K=\sup\{r>0:\exists x\in K\;\text{such that}\;x+rB_2^n\subseteq K\}.$$
The existence of an inball attaining this supremum follows from Blaschke's selection theorem. In general, the inball
need not be unique, as shown by the example of a circular cylinder.

Another parameter that will play an important role is the minimal width of $K$. The width of $K$ in the direction of $\xi\in S^{n-1}$ is defined by
$$w_K(\xi)=h_K(\xi)+h_K(-\xi)$$
and the minimal width of $K$ is given by 
$$w_K=\min\{w_K(\xi):\xi\in S^{n-1}\}.$$
Equivalently, $w_K$ is the minimal distance between two parallel supporting hyperplanes of $K$.
 
A convex body $K$ in ${\mathbb R}^n$ is called isotropic if it has volume $1$, is centered, and its covariance matrix is a multiple of the identity. 
Equivalently, there exists a constant $L_K>0$, called the isotropic constant of $K$, such that
\begin{equation*}
\|\langle \cdot ,\xi\rangle\|_{L_2(K)}^2:=\int_K\langle x,\xi\rangle^2dx =L_K^2\quad\text{for all}\;\xi\in S^{n-1}.
\end{equation*}
We shall use several geometric properties of isotropic convex bodies. 
First, it was shown in~\cite{Kannan-Lovasz-Simonovits-1995} that if $K$ is isotropic, then
\begin{equation}\label{eq:KLS-radius}
\sqrt{\frac{n+2}{n}}L_KB_2^n \subseteq K \subseteq \sqrt{n(n+2)}L_KB_2^n.
\end{equation}
Moreover, it is known (see \cite[Proposition~3.3.1]{BGVV-book}) that $L_K\gr L_{B_2^n}\gr c$ for an absolute constant $c>0$. 

Bourgain's slicing problem \cite{Bourgain-1986} asks whether there exists an absolute constant $C>0$ such that
\begin{equation}\label{eq:conjecture}
L_n:=\max\{ L_K:K\ \hbox{is an isotropic convex body in}\ \mathbb{R}^n\}\ls C.
\end{equation}
An affirmative solution was recently obtained by Klartag and Lehec~\cite{Klartag-Lehec-2025}, following an important
contribution by Guan~\cite{Guan-preprint} (see also~\cite{Bizeul-2025} for an alternative proof). 
Consequently, $L_K\approx 1$, uniformly in $n$, for every isotropic convex body $K$ in $\mathbb{R}^n$.
For further background, we refer to the survey~\cite{Giannopoulos-Pafis-Tziotziou-2025}.

\medskip 

\noindent {\bf \S 2.2. Log-concave probability measures.} 
A Borel measure $\mu$ on $\mathbb R^n$ is called log-concave if 
$$\mu(\tau A+(1-\tau)B) \gr \mu(A)^{\tau}\mu(B)^{1-\tau}$$ for any compact sets $A$
and $B$ in ${\mathbb R}^n$, and any $\tau \in (0,1)$. 
Borell \cite{Borell-1974} proved that if a log-concave probability measure $\mu $ is not supported 
on a hyperplane, then $\mu$ admits a log-concave density $f$. Such measures will be called full-dimensional.

Let $f:\mathbb{R}^n\to [0,\infty)$ be a log-concave function with finite, positive integral. 
Its barycenter is defined by
$$\operatorname{bar}(f)= \frac{\int_{\mathbb{R}^n} x\, f(x)\, dx}{\int_{\mathbb{R}^n} f(x)\, dx}.$$
We say that $f$ is centered if $\operatorname{bar}(f)=0$.
We shall use the following result of Fradelizi \cite{Fradelizi-1997}: if $f$ is a centered log-concave density on ${\mathbb R}^n$, then
\begin{equation}\label{eq:frad-2}\|f\|_{\infty }\ls e^nf(0).\end{equation}
The isotropic constant of a log-concave function $f$ with finite positive integral is the affine-invariant quantity
\begin{equation}\label{eq:definition-isotropic}
L_f:= \left( \frac{\|f\|_{\infty}}{\int_{\mathbb{R}^n} f(x)\, dx} \right)^{1/n} \det(\operatorname{Cov}(f))^{1/(2n)},
\end{equation}
where $\operatorname{Cov}(f)$ denotes the covariance matrix of $f$ with entries
\begin{equation*}
\textrm{Cov}(f)_{i,j}:=\frac{\int_{\mathbb{R}^n}x_ix_j f(x)\,dx}{\int_{\mathbb{R}^n} f(x)\,dx}-\frac{\int_{\mathbb{R}^n}x_i f(x)\,dx}{\int_{\mathbb{R}^n} f(x)\,dx}\frac{\int_{\mathbb{R}^n}x_j f(x)\,dx}{\int_{\mathbb{R}^n} f(x)\,dx}.
\end{equation*}
A log-concave function $f$ is called isotropic if
$$\operatorname{bar}(f)=0,\quad \int_{\mathbb{R}^n}f(x)\,dx=1,\quad \text{and} \quad \operatorname{Cov}(f)=I_n.$$
In this case, $L_f=\|f\|_{\infty}^{1/n}$. A full-dimensional log-concave probability measure $\mu$ on $\mathbb{R}^n$ is called isotropic if its
density $f$ is isotropic. Then, we set $L_{\mu}:=L_f$.

Note that a centered convex body $K$ in $\mathbb{R}^n$ with $\vol_n(K)=1$ is isotropic if and only if the log-concave function $L_K^n\mathds{1}_{K/L_K}$ is isotropic.

Let $\mu$ be a full-dimensional log-concave probability measure on $\mathbb{R}^n$.
For any $1\ls k \ls n-1$ and any $k$-dimensional subspace $F\subset\mathbb{R}^n$, the marginal of $\mu$ onto $F$ is defined by
$$\pi_F(\mu)(B):=\mu(P_F^{-1}(B)),$$
for every Borel set $B\subset F$. The measure $\pi_F(\mu)$ is log-concave and admits a density
$$(\pi_F f)(x)=\int_{x+F^\perp} f(y)\,dy.$$
If $f$ is centered (respectively isotropic), then so is $\pi_F f$
(see~\cite[Proposition~5.1.11]{BGVV-book}).

In particular, if $\mu$ is isotropic and $F_{\xi}=\{t\xi:t\in\mathbb{R}\}$ for $\xi\in S^{n-1}$, 
then the one-dimensional marginal $$g_{\xi}(t)=(\pi_{F_{\xi}}(f))(t)=\int_{t\xi +F_{\xi}^{\perp}}f(x)\,dx$$
is an isotropic log-concave density on $\mathbb{R}$. Consequently,
\begin{equation}\label{eq:dim-1}\|g_{\xi}\|_{\infty}=L_{g_{\xi}}\ls 1.\end{equation}
More generally, it was shown in \cite{Fradelizi-1997} that ${\rm Var}(X)\|g\|_{\infty}^2\ls 1$ for any 
centered log-concave density $g$ on $\mathbb{R}$ and $X\sim g$. Bobkov and Chistyakov \cite[Proposition~2.1]{Bobkov-Chistyakov-2015} proved that
$$\frac{1}{12}\ls {\rm Var}(X)\|g\|_{\infty}^2\ls 1$$
for every log-concave density $g$ on $\mathbb{R}$. Moreover, the left inequality holds without the log-concavity
assumption. If $g$ is even, then
$\|g\|_{\infty}=g(0)$ and
$${\rm Var}(X)g(0)^2\ls\frac{1}{2}.$$
In particular, if $g$ is even and isotropic, then ${\rm Var}(X)=1$, and hence 
\begin{equation}\label{eq:dim-12}\|g\|_{\infty}=g(0)\ls 1/\sqrt{2}.\end{equation}
This result was first obtained by Hensley \cite{Hensley-1980}; the symmetry assumption was further relaxed in \cite{Bobkov-1999}.

\smallskip

It is known that every centered log-concave density $f$ admits an isotropic position: there exists $T\in GL_n$ such that the push-forward density
$$f_T(x)=\frac{1}{|\det T|}\,f(T^{-1}x)$$
is isotropic (see \cite[Section~2.3]{BGVV-book}).
Moreover, $f_T$ is also log-concave, and $L_{f_T}=L_f$. It is also known (see \cite[Proposition~2.3.12]{BGVV-book}) that $L_f\gr c$ for every
isotropic log-concave function $f$ on $\mathbb{R}^n$, where $c>0$ is an absolute constant. On the other hand, Ball~\cite{Ball-1988} proved that for every $n$,
$$\tilde{L}_n:=\sup\big\{L_f: f\;\text{is a log-concave density on $\mathbb{R}^n$}\big\}\ls C_1L_n,$$
and hence $\tilde{L}_n\ls C_2$ by the affirmative solution of Bourgain's slicing problem. 

We refer to~\cite{AGA-book,AGA-book-2} for asymptotic convex geometry, and to~\cite{BGVV-book} for background 
on isotropic convex bodies and log-concave measures.

\section{Super-level sets of an isotropic log-concave density}\label{section:3}

Let $\mu$ be an isotropic log-concave probability measure on $\mathbb{R}^n$ with density $f$. For every $t\gr 0$, we define the super-level set
$$R_t(\mu)=\{x\in\mathbb{R}^n:\, f(x)\gr e^{-t}\|f\|_{\infty}\}.$$
Since $f$ is log-concave, each $R_t(\mu)$ is convex, and for $t>0$ we have $0\in\mathrm{int}(R_t(\mu))$.
Moreover, using the fact that every log-concave function with finite positive integral satisfies (see~\cite[Lemma~2.2.1]{BGVV-book})
\begin{equation}\label{eq:A-B}
f(x)\ls A e^{-B|x|}
\qquad\text{for all } x\in\mathbb{R}^n,
\end{equation}
for some constants $A,B>0$, we see that if $x\in R_t(\mu)$ then
$$|x|\ls \frac{1}{B}\bigl(\ln(A/\|f\|_{\infty})+t\bigr).$$
In particular, $R_t(\mu)$ is bounded. The family of bodies $\{R_t(\mu)\}_{t>0}$ is clearly increasing, and the next lemma shows
that $\mu(R_t(\mu))$ converges rapidly to $1$ as $t\to \infty$.

\begin{lemma}\label{lem:rapid}Let $\mu$ be a centered log-concave probability measure on $\mathbb{R}^n$.
Then, for every $t\gr 6n$, $\mu(R_t(\mu ))\gr 1-e^{-t/5}$.
\end{lemma}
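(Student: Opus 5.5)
We need to show that for a centered log-concave $\mu$ with density $f$ and $t\gr 6n$, the complement of $R_t(\mu)$ has measure at most $e^{-t/5}$. The plan is to reduce to a one-dimensional tail estimate for the convex function $\varphi=-\ln(f/\|f\|_\infty)\gr 0$. By definition, $\mu(\mathbb{R}^n\setminus R_t(\mu))=\mu(\{\varphi>t\})$. We integrate in polar coordinates with respect to a point where $\varphi$ is small. The natural choice is the origin, since by Fradelizi's inequality \eqref{eq:frad-2} we have $\varphi(0)\ls n$.

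\textbf{Radial bound.} Fix $\theta\in S^{n-1}$ and set $\psi(r)=\varphi(r\theta)$, which is convex with $\psi(0)\ls n$. Write $r_0=r_0(\theta)=\varrho_{R_{2n}}(\theta)$, so that $\psi(r_0)=2n$ (or $r_0=\infty$, in which case nothing needs to be done). By convexity, for $r\gr r_0$,
$$\psi(r)\gr \psi(0)+\frac{r}{r_0}\bigl(\psi(r_0)-\psi(0)\bigr)\gr \frac{r}{r_0}\,n ,$$
because $\psi(r_0)-\psi(0)\gr n$ and $\psi(0)\gr 0$. It is cleaner to use the slope $(2n-\psi(0))/r_0\gr n/r_0$ together with $\psi(r)\gr 2n+(r-r_0)n/r_0$. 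Hence $\{r:\psi(r)>t\}\subseteq\{r> r_0 s\}$ with $s=(t-n)/n$ (from the first bound, $\psi(r)>t$ requires roughly $r/r_0>(t-n)/n$ after accounting for $\psi(0)$; I will pick whichever linear bound is cleanest). On that ray we also have $f(r\theta)\ls\|f\|_\infty e^{-\psi(r)}\ls \|f\|_\infty e^{-n r/r_0}$.

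\textbf{Integration.} This gives
$$\mu(\{\varphi>t\})\ls \|f\|_\infty\, n\omega_n\int_{S^{n-1}}\int_{s r_0}^\infty r^{n-1}e^{-nr/r_0}\,dr\,d\sigma(\theta)=\|f\|_\infty\, n\omega_n\int_{S^{n-1}} r_0^n\, n^{-n}\,\Gamma(n,ns)\,d\sigma .$$
Next we compare with the mass of $R_{2n}$: we have $\mu(R_{2n})\gr e^{-2n}\|f\|_\infty\vol_n(R_{2n})=e^{-2n}\|f\|_\infty\,\omega_n\int r_0^n\,d\sigma$. Since $\mu(R_{2n})\ls 1$, this yields $\|f\|_\infty\,\omega_n\int r_0^n\,d\sigma\ls e^{2n}$. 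Therefore
$$\mu(\{\varphi>t\})\ls e^{2n}\,n^{1-n}\,\Gamma(n,ns).$$
We then use the standard incomplete gamma bound $\Gamma(n,x)\ls x^{n-1}e^{-x}\cdot\frac{x}{x-n+1}$ for $x>n-1$ (or more crudely $\Gamma(n,x)\ls e^{-x/2}\,2^n\,\Gamma(n)$). With $x=ns\approx t-n$, this gives a bound of the form $\exp\bigl(2n+(n-1)\ln(s)-ns\bigr)$ times lower-order factors, with $ns\approx t-n\gr 5n$.

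\textbf{Main obstacle.} The difficulty is the constant bookkeeping needed to get exactly $e^{-t/5}$ for all $t\gr 6n$. We need $2n+n\ln s-(t-n)+O(\ln n)\ls -t/5$, i.e. $\tfrac45 t\gr 3n+n\ln((t-n)/n)+O(\ln n)$. At $t=6n$ this reads $4.8n\gr 3n+n\ln 5\approx 4.61n$, which holds with slack $\approx 0.19n$ to absorb the $O(\ln n)$ terms. The left side grows faster in $t$, so the inequality persists for larger $t$. If the margin proves too tight for small $n$, I would sharpen the reference level, e.g. use $R_{\alpha n}$ with $\alpha$ optimized instead of $2n$, or use the better slope estimate $\psi(r)\gr 2n+(r-r_0)n/r_0$, which shifts the exponent favorably. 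Small $n$ would be handled by checking the exact gamma integral.
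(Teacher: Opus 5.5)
\textbf{There is a genuine gap, though it can be repaired within your strategy.}

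The step that fails is the inclusion $\{r:\psi(r)>t\}\subseteq\{r>sr_0\}$. Beyond $r_0$, convexity gives you only lower bounds on $\psi$, namely $\psi(r)\gr nr/r_0$ or $\psi(r)\gr n+nr/r_0$. A lower bound yields the opposite inclusion $\{r>sr_0\}\subseteq\{r:\psi(r)>t\}$, and that is useless for bounding $\mu(\{\varphi>t\})$ from above. Your inclusion would need an upper bound on $\psi$ on $(r_0,sr_0]$, and there is none, because $\psi$ may rise arbitrarily steeply right after $r_0$. For instance, take $n=1$ and $f(x)=c\,e^{-M(|x|-1)_+}$ with $M$ large. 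Then $r_0=1+2/M$ and $\{\varphi>t\}=\{|x|>1+t/M\}$, which is not contained in $\{|x|>sr_0\}$ with $s=t-1\gr 5$. Consequently, your polar integral over $r>sr_0$ ignores the part of $\mathbb{R}^n\setminus R_t(\mu)$ lying inside $sR_{2n}$. There the only information is $f<e^{-t}\|f\|_\infty$. So the displayed estimate $\mu(\{\varphi>t\})\ls e^{2n}n^{1-n}\Gamma(n,ns)$ does not follow from what you wrote.

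The repair is short. Put $a=t/n\gr 6$ and split $\{\varphi>t\}$ into its parts inside and outside $aR_{2n}$.
\begin{itemize}
\item The inside part has measure at most $e^{-t}\|f\|_\infty\vol_n(aR_{2n})\ls e^{2n-t}a^n$, by your estimate $\|f\|_\infty\vol_n(R_{2n})\ls e^{2n}$.
\item On the outside part you have $r>ar_0(\theta)$, so $f(r\theta)\ls \|f\|_\infty e^{-nr/r_0}$. Your polar computation then gives at most $e^{2n}n^{1-n}\Gamma(n,t)\ls e^{2n-t}a^n/(a-1)$, using $\Gamma(n,x)\ls x^{n}e^{-x}/(x-n+1)$ for $x>n-1$.
\end{itemize}
Altogether
$$\mu(\mathbb{R}^n\setminus R_t(\mu))\ls \frac{a}{a-1}\,e^{n(2-a+\ln a)}\ls e^{-an/5}=e^{-t/5},$$
because $2-\tfrac{4a}{5}+\ln a<-1$ and $\ln\tfrac{a}{a-1}\ls \ln\tfrac65<\tfrac15$ for $a\gr 6$. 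This holds for every $n\gr 1$, so no separate small-$n$ check is needed. You could equally run the slope argument at $\varrho_{R_t(\mu)}(\theta)\ls ar_0(\theta)$ instead of at $r_0$.

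One minor point: $\psi(r_0)=2n$ can fail when $f$ jumps at the boundary of its support. The bound $\psi(r)\gr nr/r_0$ for $r>r_0$ still holds, by applying convexity at points $r'\downarrow r_0$.

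Once repaired, your argument is a self-contained replacement for the paper's proof. The paper simply quotes Proposition~2.1 of Giannopoulos--Tziotziou: $\mu(S_r(\mu))\gr 1-e^{-r/4}$ for $r\gr 5(n-1)$, where $S_r(\mu)=\{f\gr e^{-r}f(0)\}$. It then uses Fradelizi's inequality only to get $S_{t-n}(\mu)\subseteq R_t(\mu)$.
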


\begin{proof}Define $S_t(\mu)=\{x\in\mathbb{R}^n:f(x)\gr e^{-t}f(0)\}$. It is shown in \cite[Proposition~2.1]{Giannopoulos-Tziotziou-2025} that
if $r\gr 5(n-1)$, then $\mu(S_r(\mu))\gr 1-e^{-r/4}$. Since $\|f\|_{\infty}\ls e^nf(0)$ by~\eqref{eq:frad-2}, we have 
$$e^{-t}\|f\|_{\infty}\ls e^{-t+n}f(0)$$
and therefore $S_{t-n}(\mu)\subseteq R_t(\mu)$ for all $t>n$. In particular, if $t\gr 6n$, then $r=t-n\gr 5n$, and hence 
$$\mu(R_t(\mu))\gr 1-e^{-(t-n)/4}\gr 1-e^{-t/5},$$
which proves the claim.
\end{proof}

The proof of the next lemma is essentially contained in \cite[Lemma~5.4]{Klartag-2007}. 

\begin{lemma}\label{lem:inradius}
Let $\mu$ be an isotropic log-concave probability measure on $\mathbb{R}^n$, with $n\gr 3$. Then
$$r(R_{6n}(\mu))\gr 1/3.$$
\end{lemma}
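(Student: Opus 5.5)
We need to show that the ball $\frac13 B_2^n$ is contained in $R_{6n}(\mu)$, i.e. that $f(x)\gr e^{-6n}\|f\|_\infty$ whenever $|x|\ls 1/3$. We argue by contradiction, following Klartag. Suppose some $x_0$ with $|x_0|<1/3$ satisfies $f(x_0)<e^{-6n}\|f\|_\infty$. Since $R_{6n}(\mu)$ is convex and contains $0$ in its interior, separation gives a direction $\theta\in S^{n-1}$ and some $s<1/3$ with
$$R_{6n}(\mu)\subseteq\{x:\langle x,\theta\rangle\ls s\}.$$
Thus on the half-space $H^+=\{\langle x,\theta\rangle> s\}$ we have $f<e^{-6n}\|f\|_\infty$.

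The plan is to use log-concavity along rays from a maximizer to show that $f$ decays exponentially beyond this hyperplane. Let $x^*$ be a point where $f(x^*)=\|f\|_\infty$ (or nearly so). By \eqref{eq:frad-2}, $f(0)\gr e^{-n}\|f\|_\infty$, so $0\in R_n(\mu)$.

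Now take $y$ with $\langle y,\theta\rangle=s+u$, where $u>0$. Write the point $z=(1-\lambda)\cdot 0+\lambda y$ with $\lambda$ chosen so that $\langle z,\theta\rangle=s$ (more precisely, slightly beyond $s$). Then $f(z)\ls e^{-6n}\|f\|_\infty$. Log-concavity on the segment from $0$ to $y$ gives
$$f(z)\gr f(0)^{1-\lambda}f(y)^{\lambda},$$
so
$$f(y)\ls \|f\|_\infty e^{-6n/\lambda+n(1-\lambda)/\lambda}.$$
Since $\lambda=s/(s+u)$, this yields
$$f(y)\ls \|f\|_\infty e^{-5n(s+u)/s}\ls \|f\|_\infty e^{-15n(s+u)},$$
which holds when $s>0$. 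If $s\ls 0$, the origin itself lies outside the interior, which is impossible because $0\in\mathrm{int}R_{6n}(\mu)$.

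Next we integrate over $H^+$ to bound the mass of the tail of the marginal $g_\theta$. By \eqref{eq:A-B}-type bounds this needs care, so instead we work directly with the one-dimensional marginal. The marginal $g_\theta$ is an isotropic log-concave density on $\mathbb{R}$. The half-space $H^+$ is $\{t>s\}$ with $s<1/3$. Since $g_\theta$ is isotropic, $\int_{1/3}^\infty g_\theta\gr c_0$ for an absolute $c_0$ (a standard Grünbaum-type fact: a centered log-concave variable with variance one has $P(X\gr 1/3)\gr c_0$, e.g. $c_0=1/e-$small). On the other hand, the mass of $\mu$ on $\{t>s\}$ is at most
$$\int_{H^+} f\ls \|f\|_\infty\int_{H^+}e^{-15n\langle y,\theta\rangle}dy\cdot(\text{transverse volume}),$$
but the transverse volume is infinite. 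So the pointwise bound alone is insufficient, and this is the main obstacle.

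To get around this, combine the pointwise decay with the bound $\mu(H^+)\ls 1-\mu(R_{6n})$ and Lemma~\ref{lem:rapid}, which gives $\mu(H^+)\ls e^{-6n/5}$. Then
$$P(\langle X,\theta\rangle\gr 1/3)\ls\mu(H^+)\ls e^{-6n/5}<c_0$$
for $n\gr 3$, provided $c_0$ is explicit enough (Grünbaum gives $P(\langle X,\theta\rangle\gr0)\gr 1/e$, and a one-dimensional log-concave computation using $g_\theta\ls 1$ from \eqref{eq:dim-1} gives $P(0\ls\langle X,\theta\rangle<1/3)\ls 1/3$). Hence $c_0\gr 1/e-1/3>0.03$, while $e^{-18/5}<0.03$. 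This is a contradiction, so $\frac13B_2^n\subseteq R_{6n}(\mu)$. The delicate step is making the constants work at $n=3$, which is exactly where the restriction $n\gr3$ enters.
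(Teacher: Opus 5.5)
Your argument is correct and is essentially the paper's proof: Grünbaum's lemma (mass at least $1/e$ on the side $\langle x,\theta\rangle\ge 0$), the bound $\|g_\theta\|_\infty\le 1$ (the slab $0\le\langle x,\theta\rangle<1/3$ has mass at most $1/3$), and Lemma~\ref{lem:rapid} (the complement of $R_{6n}(\mu)$ has mass at most $e^{-6n/5}$). The only difference is that you argue by contradiction at the fixed threshold $1/3$, whereas the paper solves directly for $r\ge e^{-1}-e^{-6n/5}$; the paragraph on pointwise decay along rays is never used and can be deleted.
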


\begin{proof}Let $r:=r(R_{6n}(\mu))$ denote the inradius of $R_{6n}(\mu)$ with respect to the origin. Then there exists $\xi\in S^{n-1}$ such that
$h_{R_{6n}(\mu)}(\xi)\ls r$, and consequently
$$R_{6n}(\mu)\subseteq \{x\in\mathbb{R}^n:\langle x,\xi\rangle \ls r\}.$$
We decompose this half-space as
$$\{x\in\mathbb{R}^n:\langle x,\xi\rangle \ls r\}=\{x\in\mathbb{R}^n:\langle x,\xi\rangle \ls 0\}\cup \{x\in\mathbb{R}^n:0<\langle x,\xi\rangle \ls r\}.$$
Since $\mu$ is centered, Gr\"{u}nbaum's lemma (see~\cite[Lemma~2.2.6]{BGVV-book}) yields 
$$\mu(\{x\in\mathbb{R}^n:\langle x,\xi\rangle\ls 0\})\ls 1-\frac{1}{e}.$$
Let $F_{\xi}=\{t\xi:t\in\mathbb{R}\}$ be the one-dimensional subspace spanned by $\xi$. From \eqref{eq:dim-1},
the density $g_{\xi}$ of the marginal $\pi_{F_{\xi}}(\mu)$ satisfies
\begin{equation*}\|g_{\xi}\|_{\infty}\ls 1.\end{equation*}
therefore,
$$\mu(\{x\in\mathbb{R}^n:0<\langle x,\xi\rangle\ls r\})=\int_0^r\left(\int_{t\xi +F_{\xi}^{\perp}}f(x)\,dx\right)\,dt=\int_0^rg_{\xi}(t)\,dt\ls r.$$
Combining these estimates and using Lemma~\ref{lem:rapid}, we obtain
$$1-e^{-6n/5}\ls \mu(R_{6n}(\mu))\ls 1-\frac{1}{e}+r.$$
For $n\gr 3$, this implies 
$$r\gr e^{-1}-e^{-6n/5}\gr e^{-1}-e^{-18/5}\gr 1/3,$$
as claimed.\end{proof}

\begin{remark}\label{rem:18}\rm We shall use the following observation. Let $\mu$ be a centered log-concave probability measure 
on $\mathbb{R}^n$ with density $f$ satisfying $f(0)=\|f\|_{\infty}$. For any $x\in R_t(\mu)$ and any $\tau\in (0,1)$, 
\begin{equation}\label{eq:scaling}f(\tau x)\gr f(x)^{\tau}f(0)^{1-\tau}\gr e^{-\tau t}f(0)^{\tau}f(0)^{1-\tau}=e^{-\tau t}f(0).\end{equation}
This inequality relies only on log-concavity and the normalization $f(0)=\|f\|_{\infty}$. As a consequence,
\begin{equation}\label{eq:R-concavity}\tau R_t(\mu)\subseteq R_{\tau t}(\mu),\qquad t>0,\;\tau\in (0,1).\end{equation}
In particular, if $\mu$ is isotropic, then 
$$r(R_n(\mu))\gr r(R_{6n}(\mu))/6\gr 1/18.$$
\end{remark}

\section{Upper bound for the perimeter}\label{section:4}

In this section we prove the general upper bounds for $\Gamma_n$ and $\Gamma_n^{(s)}$. Throughout, $\mu$ denotes 
an isotropic log-concave probability measure on $\mathbb{R}^n$ with density $f$.

\smallskip 

We begin with a simple estimate for the measure of a convex body in terms of its
minimal width. Recall that for a convex body $A\subset\mathbb{R}^n$,
$w_A=\min\{w_A(\xi):\xi\in S^{n-1}\}$, where $w_A(\xi)=h_A(\xi)+h_A(-\xi)$
is the width of $A$ in the direction of $\xi\in S^{n-1}$.

\begin{lemma}\label{lem:measure-width}Let $\mu$ be an isotropic log-concave probability measure on $\mathbb{R}^n$.
Then for every convex body $A\subset\mathbb{R}^n$,
$$\mu(A)\ls w_A.$$
\end{lemma}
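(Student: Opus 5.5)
The plan is to reduce the statement to a one-dimensional estimate along a direction of minimal width. Since $A$ is compact, the minimum defining $w_A$ is attained, so we may choose $\xi\in S^{n-1}$ with $w_A(\xi)=w_A$. Then $A$ lies in the slab between two parallel supporting hyperplanes:
$$A\subseteq\{x\in\mathbb{R}^n:\ -h_A(-\xi)\ls\langle x,\xi\rangle\ls h_A(\xi)\}.$$
By monotonicity of $\mu$ it is enough to show that this slab has $\mu$-measure at most its width $h_A(\xi)+h_A(-\xi)=w_A$.

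Next we express the measure of the slab through the one-dimensional marginal. Let $F_{\xi}$ be the line spanned by $\xi$, and let $g_{\xi}$ be the density of $\pi_{F_{\xi}}(\mu)$. Fubini's theorem gives
$$\mu\big(\{x:\ a\ls\langle x,\xi\rangle\ls b\}\big)=\int_a^b g_{\xi}(t)\,dt,$$
exactly as in the proof of Lemma~\ref{lem:inradius}. Because $\mu$ is isotropic, $g_{\xi}$ is an isotropic log-concave density on $\mathbb{R}$, so \eqref{eq:dim-1} yields $\|g_{\xi}\|_{\infty}\ls 1$. It follows that
$$\int_a^b g_{\xi}(t)\,dt\ls b-a.$$
Taking $a=-h_A(-\xi)$ and $b=h_A(\xi)$ gives $\mu(A)\ls w_A$.

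No step here is a genuine obstacle. The only substantive input is the bound $\|g_{\xi}\|_{\infty}=L_{g_{\xi}}\ls 1$ for one-dimensional isotropic log-concave densities. That bound is already recorded in \eqref{eq:dim-1}, via Fradelizi's inequality ${\rm Var}(X)\|g\|_{\infty}^2\ls 1$, and we use it as stated.
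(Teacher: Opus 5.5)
Your proposal is correct and follows essentially the same argument as the paper. You enclose $A$ in the slab orthogonal to $\xi$, compute the slab's measure through the one-dimensional marginal $g_{\xi}$, and use $\|g_{\xi}\|_{\infty}\ls 1$ from \eqref{eq:dim-1}; the only difference is that you fix a minimizing direction at the outset, while the paper proves $\mu(A)\ls w_A(\xi)$ for every $\xi$ and minimizes at the end.
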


\begin{proof}Fix $\xi\in S^{n-1}$ and let $F_{\xi}=\{t\xi:t\in\mathbb{R}\}$.
By \eqref{eq:dim-1}, the density $g_{\xi}$ of the marginal $\pi_{F_{\xi}}(\mu)$ satisfies
$\|g_{\xi}\|_{\infty}\ls 1$.

Since 
$$A\subseteq S_{\xi}=\{x\in\mathbb{R}^n:-h_A(-\xi)\ls\langle x,\xi\rangle \ls h_A(\xi)\},$$
we obtain
$$\mu(A)\ls\mu(S_{\xi})=\int_{-h_A(-\xi)}^{h_A(\xi)}\int_{t\xi +F_{\xi}^{\perp}}f(x)\,dx\,dt=\int_{-h_A(-\xi)}^{h_A(\xi)}g_{\xi}(t)\,dt
\ls w_A(\xi).$$
Taking the minimum over $\xi\in S^{n-1}$ yields the result.\end{proof}

\smallskip 

The next lemma is a standard dilation estimate for log-concave measures (see~\cite[Lemma~3.1]{Giannopoulos-Tziotziou-2025}). 
Actually, we shall prove and use a more elaborate version later on, so we skip the details.

\begin{lemma}\label{lem:dilation}Let $\mu$ be a centered log-concave probability measure on ${\mathbb R}^n$. Then for
every $\delta>0$ and every Borel set $A\subset\mathbb{R}^n$,
$$\mu((1+\delta)A)\ls e^{2n\delta }\mu(A).$$
\end{lemma}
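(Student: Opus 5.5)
The natural route is to write $\mu((1+\delta)A)$ as an integral over $A$ by changing variables, $x=(1+\delta)y$:
$$\mu((1+\delta)A)=(1+\delta)^n\int_A f((1+\delta)y)\,dy .$$
It then suffices to bound $f((1+\delta)y)$ pointwise by a constant multiple of $f(y)$, with a constant independent of $y$. Log-concavity gives such a comparison: $y$ lies on the segment between $0$ and $(1+\delta)y$, since
$$y=\tfrac{1}{1+\delta}\,(1+\delta)y+\tfrac{\delta}{1+\delta}\,0 .$$
Hence
$$f(y)\gr f((1+\delta)y)^{1/(1+\delta)}\,f(0)^{\delta/(1+\delta)} .$$
This says $f((1+\delta)y)\ls f(y)^{1+\delta}f(0)^{-\delta}$, which is not yet linear in $f(y)$. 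I will write $f(y)^{1+\delta}f(0)^{-\delta}=f(y)\,\bigl(f(y)/f(0)\bigr)^{\delta}$ and use Fradelizi's inequality \eqref{eq:frad-2}, $f(y)\ls\|f\|_\infty\ls e^n f(0)$. This gives $(f(y)/f(0))^\delta\ls e^{n\delta}$, so
$$f((1+\delta)y)\ls e^{n\delta}f(y)\qquad\text{for all } y .$$
(If $f(0)=0$, centeredness and log-concavity are contradicted, since $0$ lies in the interior of the support; so $f(0)>0$.)

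Combining the two steps,
$$\mu((1+\delta)A)\ls (1+\delta)^n e^{n\delta}\mu(A)\ls e^{n\delta}e^{n\delta}\mu(A)=e^{2n\delta}\mu(A),$$
using $1+\delta\ls e^\delta$. For a Borel set $A$ the change of variables and the pointwise bound integrate without any further issue.

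The only genuinely nontrivial input is the centering. It enters only through \eqref{eq:frad-2}, which controls the ratio $\|f\|_\infty/f(0)$ by $e^n$; this is the step I would check most carefully. Note that the direction of the log-concavity inequality must be used with $0$ as the far endpoint, so that $f$ at the dilated point is bounded above.
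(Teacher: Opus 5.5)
Your proof is correct and is essentially the paper's argument. The paper skips the details here and refers to its more elaborate version, Lemma~\ref{lem:center-general-1}, whose proof is exactly your change of variables followed by log-concavity along the segment to the center. Taking the center to be $0$ and using Fradelizi's bound $\|f\|_\infty\ls e^nf(0)$ from \eqref{eq:frad-2} gives the factor $(1+\delta)^ne^{n\delta}\ls e^{2n\delta}$, just as you obtained.
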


\begin{note*}\rm If the density of $\mu$ is a geometric log-concave function, i.e. $f(0)=\|f\|_{\infty}$, 
then the above estimate improves to $\mu((1+\delta)A)\ls e^{n\delta }\mu(A).$
\end{note*}

Using Lemma~\ref{lem:dilation} we can control the perimeter of a convex body containing the origin in terms of its measure
and inradius with respect to the origin.

\begin{lemma}\label{lem:center-origin}
Let $\mu$ be a centered log-concave probability measure on $\mathbb{R}^n$, and let $A\subset\mathbb{R}^n$ be a convex body 
such that $rB_2^n\subseteq A$ for some $r>0$. Then
$$\mu^+(\partial A)\ls \frac{2n\mu(A)}{r}.$$
\end{lemma}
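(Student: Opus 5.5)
The plan is to mimic the sketch of \eqref{eq:surface-inradius}, replacing Lebesgue measure by $\mu$ and the exact scaling of volume by the dilation estimate of Lemma~\ref{lem:dilation}.

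First, since $rB_2^n\subseteq A$ and $A$ is convex, for every $\epsilon>0$ we have
$$A+\epsilon B_2^n\subseteq A+\frac{\epsilon}{r}A=\Bigl(1+\frac{\epsilon}{r}\Bigr)A.$$
The equality uses convexity, namely $A+sA=(1+s)A$ for $s\gr 0$. Hence
$$\mu\big((A+\epsilon B_2^n)\setminus A\big)\ls \mu\bigl((1+\epsilon/r)A\bigr)-\mu(A).$$

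Next, apply Lemma~\ref{lem:dilation} with $\delta=\epsilon/r$. This gives $\mu((1+\epsilon/r)A)\ls e^{2n\epsilon/r}\mu(A)$, and therefore
$$\frac{\mu\big((A+\epsilon B_2^n)\setminus A\big)}{\epsilon}\ls \frac{e^{2n\epsilon/r}-1}{\epsilon}\,\mu(A).$$
Letting $\epsilon\to 0^+$, the right-hand side tends to $\frac{2n}{r}\mu(A)$. Since the $\liminf$ defining $\mu^+(\partial A)$ is bounded by this limit, we obtain $\mu^+(\partial A)\ls 2n\mu(A)/r$.

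There is essentially no obstacle here. The only points to check are the inclusion $A+\epsilon B_2^n\subseteq(1+\epsilon/r)A$, which requires the ball $rB_2^n$ to be centered at the origin (and that is exactly the hypothesis), and the validity of Lemma~\ref{lem:dilation} for the convex body $A$. The latter holds because the lemma is stated for all Borel sets and requires only that $\mu$ be centered, with no condition linking $A$ to the barycenter.
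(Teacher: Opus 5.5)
Your proof is correct and matches the paper's argument: you use the inclusion $A+\epsilon B_2^n\subseteq(1+\epsilon/r)A$, which follows from $rB_2^n\subseteq A$ and convexity, then apply Lemma~\ref{lem:dilation} with $\delta=\epsilon/r$, and finally let $\epsilon\to 0^+$.
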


\begin{proof}
Since $A$ is convex, we have $tA+sA=(t+s)A$ for any $t,s>0$. Using also Lemma~\ref{lem:dilation}, we write
$$\mu(A+\epsilon B_2^n)\ls \mu(A+(\epsilon /r)A)= \mu((1+\epsilon /r)A)\ls e^{2n\epsilon /r}\mu(A).$$
Hence,
$$\mu^+(\partial A) =\liminf_{\epsilon\to 0}\frac{\mu(A+\epsilon B_2^n)-\mu(A)}{\epsilon}\ls
\lim_{\epsilon\to 0}\frac{e^{2n\epsilon /r}-1}{\epsilon }\mu(A)=\frac{2n \mu(A)}{r},$$
as claimed.
\end{proof} 

The last fact that we need is the simple observation that if $A$ is a symmetric convex body in $\mathbb{R}^n$ then there exists an inball of $A$ centered  
at the origin, and 
\begin{equation}\label{eq:symmetric-w-r}
w_A=2r_A,
\end{equation} 
where $w_A$ is the minimal width and $r_A$ is the inradius of $A$.

We can now give a one-line proof of Theorem~\ref{th:symmetric-main-upper-bound}.

\begin{proof}[Proof of Theorem~$\ref{th:symmetric-main-upper-bound}$]
Let $A$ be symmetric. Then $r_AB_2^n\subseteq A$ and $w_A=2r_A$. By Lemmas~\ref{lem:center-origin} and \ref{lem:measure-width},
$$\mu^+(\partial A)\ls \frac{2n\mu(A)}{r_A}\ls \frac{2n w_A}{r_A}=4n,$$
and the theorem follows.
\end{proof}

\begin{note*}\rm If the density of $\mu$ is geometric log-concave, then the improved dilation estimate yields 
$\mu^+(\partial A)\ls 2n$ for every symmetric convex body $A\subset\mathbb{R}^n$.
\end{note*}

\smallskip 

We now turn to the proof of Theorem~\ref{th:main-upper-bound}. 
In the general case, $A$ need not be symmetric, and also the origin may not be an interior point of $A$. Recall that, given a convex body 
$A \subset \mathbb{R}^n$, we denote by $r_A$ the radius of the largest Euclidean ball contained in $A$ and by $x_A$ the center of such a ball. 
In other words, $x_A+r_AB_2^n$ is an inball of $A$.

\begin{lemma}\label{lem:center-general-1}Let $\mu$ be a centered log-concave probability measure on $\mathbb{R}^n$. Let $A$ be a convex body in $\mathbb{R}^n$
and let $x_A+r_AB_2^n\subseteq A$ be an inball of $A$. Then
$$\mu^+(\partial A)\ls \frac{n+\ln(\|f\big|_A\|_{\infty}/f(x_A))}{r_A}\mu(A).$$
\end{lemma}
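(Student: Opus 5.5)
We mimic the proof of Lemma~\ref{lem:center-origin}, with the dilation now taken about the inball center $x_A$ rather than the origin. Since $A$ is convex and $x_A+r_AB_2^n\subseteq A$, we have $A-x_A\supseteq r_AB_2^n$. Hence for $\epsilon>0$,
$$A+\epsilon B_2^n\subseteq A+\frac{\epsilon}{r_A}(A-x_A)=x_A+\Bigl(1+\frac{\epsilon}{r_A}\Bigr)(A-x_A).$$
Writing $\lambda=1+\epsilon/r_A$ and $D_\lambda(y)=x_A+\lambda(y-x_A)$, it therefore suffices to show
$$\mu(D_\lambda(A))\ls \lambda^{n}\,\Bigl(\frac{M}{f(x_A)}\Bigr)^{\lambda-1}\mu(A),\qquad M:=\|f|_A\|_\infty,$$
or at least that this holds to first order in $\lambda-1$. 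Then
$$\frac{\mu(A+\epsilon B_2^n)-\mu(A)}{\epsilon}\ls\frac{\lambda^n (M/f(x_A))^{\lambda-1}-1}{\epsilon}\mu(A)\longrightarrow \frac{n+\ln(M/f(x_A))}{r_A}\mu(A).$$

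To prove the dilation bound, change variables $z=D_\lambda(y)$ with $y\in A$:
$$\mu(D_\lambda(A))=\lambda^n\int_A f\bigl(x_A+\lambda(y-x_A)\bigr)\,dy.$$
We need a pointwise comparison of $f(x_A+\lambda(y-x_A))$ with $f(y)$. Log-concavity along the segment through $x_A$, $y$ and $z=x_A+\lambda(y-x_A)$ gives
$$f(y)\gr f(z)^{1/\lambda}f(x_A)^{1-1/\lambda},$$
since $y=\tfrac1\lambda z+(1-\tfrac1\lambda)x_A$. Therefore
$$f(z)\ls f(y)^{\lambda}f(x_A)^{1-\lambda}=f(y)\Bigl(\frac{f(y)}{f(x_A)}\Bigr)^{\lambda-1}\ls f(y)\Bigl(\frac{M}{f(x_A)}\Bigr)^{\lambda-1},$$
using $y\in A$, so $f(y)\ls M$. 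Integrating over $A$ yields the desired inequality.

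Note that $M/f(x_A)\gr 1$, so the exponent is harmless, and we need $f(x_A)>0$. If $f(x_A)=0$, the right-hand side of the lemma is $+\infty$ and there is nothing to prove. Centeredness is not even needed.

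The main subtlety is this pointwise step: the supremum must be taken over $y\in A$ and not over the dilated body, and this is exactly what the inverse-direction convexity argument provides. The remaining limit computation is routine.
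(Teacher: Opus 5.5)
Your proposal is correct and follows the paper's proof essentially step for step: you use the same inclusion $A+\epsilon B_2^n\subseteq x_A+(1+\epsilon/r_A)(A-x_A)$, the same change of variables, and the same log-concavity comparison $f(x_A+\lambda(y-x_A))\ls f(y)\bigl(M/f(x_A)\bigr)^{\lambda-1}$ with $y\in A$. The only cosmetic difference is that the paper first bounds $(1+\delta)^n\ls e^{n\delta}$ before passing to the limit, while you differentiate directly; your remark that centeredness is never used applies equally to the paper's argument.
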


\begin{proof}Let $K=-x_A+A$, so that $0\in K$ and $r_AB_2^n\subseteq K$. For $\epsilon>0$, we have
$$A+\epsilon B_2^n=x_A+K+\epsilon B_2^n\subseteq x_A+K+(\epsilon /r_A)K=x_A+(1+\epsilon /r_A)K.$$
Letting $\delta =\epsilon /r_A$,
$$\mu(A+\epsilon B_2^n)\ls \int_{x_A+(1+\delta)K}f(u)\,du=\int_{(1+\delta)K}f(x_A+v)\,dv=(1+\delta)^n\int_Kf(x_A+(1+\delta)y)\,dy.$$ 
Writing
$$x_A+y=\frac{1}{1+\delta}(x_A+(1+\delta)y)+\frac{\delta}{1+\delta}x_A$$
and using log-concavity of $f$, we get $f(x_A+y)^{1+\delta}\gr f(x_A+(1+\delta)y)f(x_A)^{\delta}$, and hence
\begin{equation}\label{eq:important}f(x_A+(1+\delta)y)\ls f(x_A+y)\left(\frac{f(x_A+y)}{f(x_A)}\right)^{\delta}
\ls f(x_A+y)\left(\frac{\|f\big|_A\|_{\infty}}{f(x_A)}\right)^{\delta}.\end{equation}
Combining the above, we see that
$$\mu(A+\epsilon B_2^n)\ls (1+\delta)^n\left(\frac{\|f\big|_A\|_{\infty}}{f(x_A)}\right)^{\delta}\int_Kf(x_A+y)\,dy
\ls \exp\left(\delta\left(n+\ln\left(\|f\big|_A\|_{\infty}/f(x_A)\right)\right)\right)\mu(A).$$
It follows that
\begin{align*}\mu^+(\partial A) 
&=\liminf_{\epsilon\to 0^+}\frac{\mu(A+\epsilon B_2^n)-\mu(A)}{\epsilon }
\ls \mu(A)\lim_{\epsilon\to 0^+}
\frac{\exp\big(\epsilon /r_A(n+\ln(\|f\big|_A\|_{\infty}/f(x_A)))\big)-1}{\epsilon }\\
&=\mu(A)\frac{n+\ln(\|f\big|_A\|_{\infty}/f(x_A))}{r_A},
\end{align*}
as claimed. \end{proof} 

\begin{remark}\rm The argument above gives in fact a stronger estimate. Let $\mu_A$ be the probability measure with density $(\mu(A))^{-1}f(y)\mathds{1}_A(y)$. 
From \eqref{eq:important} we see that
\begin{align*}
\mu(A+\epsilon B_2^n) &\ls (1+\delta)^n\int_Kf(x_A+(1+\delta)y)\,dy
\ls e^{n\delta}\int_Kf(x_A+y)\left(\frac{f(x_A+y)}{f(x_A)}\right)^{\delta}\,dy\\
&=e^{n\delta}\mu(A)\int_A\left(\frac{f(y)}{f(x_A)}\right)^{\delta}d\mu_A(y).
\end{align*}
It follows that
\begin{align*}\mu^+(\partial A) &=\liminf_{\epsilon\to 0^+}\frac{\mu(A+\epsilon B_2^n)-\mu(A)}{\epsilon }\ls \mu(A)\lim_{\epsilon\to 0^+}
\frac{1}{\epsilon }\left[e^{n\epsilon /r_A}\left(\int_A\left(\frac{f(y)}{f(x_A)}\right)^{\epsilon/r_A}d\mu_A(y)\right)-1\right]\\
&=\frac{\mu(A)}{r_A}\left[n+\int_A\ln \left(\frac{f(y)}{f(x_A)}\right)\,d\mu_A(y)\right]. 
\end{align*}
Using Lemma~\ref{lem:measure-width} we obtain
$$\mu^+(\partial A)\ls \frac{w_A}{r_A}\left[n+\int_A\ln \left(\frac{f(y)}{f(x_A)}\right)\,d\mu_A(y)\right].$$
\end{remark}

For the proof of Theorem~\ref{th:main-upper-bound} we shall use a theorem of Steinhagen \cite{Steinhagen-1922} (see also \cite[Theorem~1.5.2]{Toth-book}) 
to compare the minimal width $w_A$ of a convex body $A$ in $\mathbb{R}^n$ with its inradius $r_A$.

\begin{theorem}[Steinhagen]\label{th:steinhagen}For every convex body $A\subset\mathbb{R}^n$,
$$w_A\ls 2\sqrt{n}r_A.$$
\end{theorem}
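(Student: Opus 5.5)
The plan is to reduce to the case of a simplex (or a cylinder over a lower-dimensional simplex) circumscribed about the inball. Then I would exhibit an explicit direction in which that simplex has width at most $2\sqrt{n}\,r_A$, using a random partition of its facets.

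\emph{Step 1: reduction to a circumscribed simplex.} By translation and scaling, assume $x_A=0$ and $r_A=1$. Let $u_1,\dots,u_m\in S^{n-1}$ be the outer unit normals at the contact points of $B_2^n$ with $\partial A$. Maximality of the inball means that $0\in\mathrm{conv}\{u_i\}$. Otherwise there would be a direction $v$ with $\langle v,u_i\rangle>0$ for all $i$, and the ball could be translated along $-v$ and then enlarged. By Carath\'eodory we may keep at most $n+1$ of these normals, with weights $\lambda_i>0$, $\sum\lambda_i=1$, $\sum\lambda_iu_i=0$. Since $A\subseteq P:=\bigcap_i\{x:\langle x,u_i\rangle\ls 1\}$, we have $w_A\ls w_P$. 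If $m\ls n$, the $u_i$ span a subspace $E$ of dimension $k=m-1<n$, and $P=(P\cap E)+E^\perp$. Here $P\cap E$ is a $k$-simplex circumscribed about $B_E$, and its width in directions of $E$ is the width of $P$. So it suffices to prove that a $k$-simplex circumscribed about the unit ball has minimal width at most $2\sqrt{k}$, since $\sqrt{k}\ls\sqrt n$. From now on, $m=n+1$ and $P$ is a simplex.

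\emph{Step 2: widths in partition directions.} Let $v_j$ be the vertex opposite facet $j$. Then $\langle v_j,u_i\rangle=1$ for $i\neq j$, and $\sum_i\lambda_iu_i=0$ forces $\langle v_j,u_j\rangle=1-1/\lambda_j$. For coefficients $a\in[0,1]^m$, set $d=\sum_ia_i\lambda_iu_i$. A direct computation gives
$$\langle v_k,d\rangle=\sum_i a_i\lambda_i-a_k,$$
so the width of $P$ in direction $d/|d|$ is at most $(\max a-\min a)/|d|\ls 1/|d|$. Write $a=(1+\varepsilon)/2$ with $\varepsilon\in[-1,1]^m$. Because $\sum_i\lambda_iu_i=0$, we get $d=\frac12\sum\varepsilon_i\lambda_iu_i$, so $4|d|^2=\varepsilon^TG\varepsilon$, where $G_{ij}=\lambda_i\lambda_j\langle u_i,u_j\rangle$. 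The Gram matrix $G$ satisfies $G\mathbf 1=0$ and $\mathrm{tr}\,G=\sum\lambda_i^2\gr 1/m$. The goal is therefore to find $\varepsilon\in[-1,1]^m$ with $\varepsilon^TG\varepsilon\gr 1/n$; this yields $w_P\ls 2\sqrt n$.

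\emph{Step 3: averaging.} When $m=n+1$ is even, I would take $\varepsilon$ to be a uniformly random balanced $\pm1$ vector. Then $\mathbb E\,\varepsilon_i\varepsilon_j=-1/(m-1)$ for $i\ne j$. Using $\sum_{i\neq j}G_{ij}=-\mathrm{tr}\,G$, this gives
$$\mathbb E\,\varepsilon^TG\varepsilon=\frac{m}{m-1}\,\mathrm{tr}\,G\gr\frac{1}{m-1}=\frac1n,$$
and some choice of $\varepsilon$ achieves at least the average. This is exactly the claimed bound.

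\emph{The main obstacle is the odd case} $m=n+1$, where no balanced $\pm1$ vector exists. The nearly balanced average only gives $\frac{m+1}{m^2}$, which falls just short of $1/n$ when all $\lambda_i=1/m$. Here I would exploit the freedom $\varepsilon\in[-1,1]^m$. Take $\varepsilon_k=t$ for one index $k$ and a balanced $\pm1$ vector on the remaining $m-1$ coordinates. Compute $\mathbb E\,\varepsilon^TG\varepsilon$ as a quadratic in $t$ in terms of $\mathrm{tr}\,G$ and $G_{kk}$, then optimize over $t\in[-1,1]$ and over the choice of $k$. Two sub-cases arise. If the $\lambda_i$ are unequal, there is slack in $\mathrm{tr}\,G>1/m$. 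In the equal-weight case the estimate $\mathrm{tr}\,G\gr 1/m$ is tight, so I would bound $\max_\varepsilon\varepsilon^TG\varepsilon$ directly using the symmetric constraint $\sum_iu_i=0$. I expect to recover the sharp even-dimensional value $2(n+1)/\sqrt{n+2}\ls2\sqrt n$.
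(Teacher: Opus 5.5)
Your Steps~1 and~2 are sound, apart from the routine compactness remark needed to translate and enlarge the ball in Step~1. For $m=n+1$ even, i.e.\ $n$ odd, Step~3 is a complete proof.

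The gap is the case $m=n+1$ odd, i.e.\ $n$ even, and it cannot be closed, because the statement as written is false in even dimensions. For the equilateral triangle, $w_A$ is the altitude and $r_A$ is one third of it, so $w_A=3r_A>2\sqrt{2}\,r_A$. More generally, the regular simplex in $\mathbb{R}^n$ with $n$ even has $w_A=\frac{2(n+1)}{\sqrt{n+2}}\,r_A$. Since $\frac{(n+1)^2}{n+2}=n+\frac{1}{n+2}>n$, the inequality $2(n+1)/\sqrt{n+2}\ls 2\sqrt{n}$ you expect to land on goes the wrong way.

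You can see the obstruction inside your own framework. For equal weights $\lambda_i=1/m$ one has $\langle u_i,u_j\rangle=-1/n$ for $i\neq j$, hence $G=\frac{1}{nm^2}(mI-\mathbf{1}\mathbf{1}^T)$ and
\[
\max_{\varepsilon\in[-1,1]^m}\varepsilon^TG\varepsilon=\max_{\varepsilon\in\{-1,1\}^m}\frac{m|\varepsilon|^2-(\sum_i\varepsilon_i)^2}{nm^2}=\frac{m^2-1}{nm^2}=\frac{n+2}{(n+1)^2}<\frac1n .
\]
Here the convex quadratic attains its maximum over the cube at a vertex, and $\sum_i\varepsilon_i\neq 0$ there because $m$ is odd. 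So no choice of $k$, $t$, or any other $\varepsilon\in[-1,1]^m$ reaches your target $1/n$.

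The paper gives no proof of this statement; it only cites Steinhagen and Fejes T\'oth. What Steinhagen actually proved is $w_A\ls 2\sqrt{n}\,r_A$ for $n$ odd and $w_A\ls\frac{2(n+1)}{\sqrt{n+2}}\,r_A$ for $n$ even, both sharp for the regular simplex. Your nearly balanced average already proves the even case. It gives $\mathbb{E}\,\varepsilon^TG\varepsilon=\frac{m+1}{m}\,\mathrm{tr}\,G\gr\frac{m+1}{m^2}$, i.e.\ $w_P\ls 2m/\sqrt{m+1}=2(n+1)/\sqrt{n+2}$, so you should stop there rather than try to improve it.

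Your lower-dimensional reduction must likewise use the correct bound for $k$-simplices, since the claim ``width at most $2\sqrt{k}$'' is false for $k$ even. It still works, because $\frac{2(k+1)}{\sqrt{k+2}}<2\sqrt{k+1}\ls 2\sqrt{n}$ for $k\ls n-1$.

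Altogether your argument yields $w_A\ls 2\sqrt{n+1}\,r_A$ in every dimension, and this is all the paper really needs. In Proposition~\ref{prop:perimeter-general-1}, Corollary~\ref{cor:perimeter-subset-Rn} and Theorem~\ref{th:perimeter} the constant worsens by a factor at most $\sqrt{2}$, and Theorem~\ref{th:main-upper-bound} is unaffected.
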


As a first step, combining Theorem~\ref{th:steinhagen} with Lemmas~\ref{lem:center-general-1} and~\ref{lem:measure-width} we obtain the following estimate.

\begin{proposition}\label{prop:perimeter-general-1}Let $\mu$ be an isotropic log-concave probability measure on $\mathbb{R}^n$. Then for every  
convex body $A\subset\mathbb{R}^n$, 
$$\mu^+(\partial A)\ls 2\sqrt{n}\big(n+\ln(\|f\big|_A\|_{\infty}/f(x_A))\big),$$
where $x_A$ is the center of the ball of radius $r_A$ inscribed in $A$.
\end{proposition}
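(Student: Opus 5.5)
The plan is to chain the three ingredients stated just before the proposition. First I apply Lemma~\ref{lem:center-general-1} to the given convex body $A$ with its inball $x_A+r_AB_2^n$. This requires $\mu$ to be centered, which holds since $\mu$ is isotropic. It gives
$$\mu^+(\partial A)\ls \frac{n+\ln(\|f\big|_A\|_{\infty}/f(x_A))}{r_A}\,\mu(A).$$
Note that the logarithmic term is nonnegative, because $x_A\in A$ and hence $f(x_A)\ls \|f\big|_A\|_{\infty}$. So the bracket is a positive quantity, and any upper bound on $\mu(A)/r_A$ may be multiplied through it.

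Second, I bound $\mu(A)$ by Lemma~\ref{lem:measure-width}, which uses the isotropy of $\mu$ through the one-dimensional marginal bound \eqref{eq:dim-1}. This gives $\mu(A)\ls w_A$, and therefore $\mu(A)/r_A\ls w_A/r_A$.

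Third, Steinhagen's inequality (Theorem~\ref{th:steinhagen}) gives $w_A\ls 2\sqrt{n}\,r_A$, so $\mu(A)/r_A\ls 2\sqrt{n}$. Substituting this into the first display yields
$$\mu^+(\partial A)\ls 2\sqrt{n}\big(n+\ln(\|f\big|_A\|_{\infty}/f(x_A))\big),$$
which is the claim.

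There is no real obstacle; the only point needing care is the degenerate case $f(x_A)=0$. In that case the right-hand side is $+\infty$ and the inequality is trivial, so we may assume $f(x_A)>0$. The argument is otherwise a direct substitution, exactly as in the proof of Theorem~\ref{th:symmetric-main-upper-bound}, with Steinhagen's inequality replacing the identity $w_A=2r_A$ from \eqref{eq:symmetric-w-r}.
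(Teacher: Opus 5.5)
Your proposal is correct and follows the paper's proof: Lemma~\ref{lem:center-general-1}, then $\mu(A)\le w_A$ from Lemma~\ref{lem:measure-width}, then Steinhagen's inequality. Your remark that $n+\ln(\|f|_A\|_\infty/f(x_A))>0$ justifies multiplying through by the bound $\mu(A)/r_A\le 2\sqrt{n}$, a step the paper leaves implicit. One caveat applies equally to the paper: the form $w_A\le 2\sqrt{n}\,r_A$ of Steinhagen's theorem holds only for odd $n$. For even $n$ the sharp constant is $2(n+1)/\sqrt{n+2}>2\sqrt{n}$; for example, the equilateral triangle has $w=3r>2\sqrt{2}\,r$. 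So strictly the chain gives the bound with $2\sqrt{n+1}$ in place of $2\sqrt{n}$, which does not affect the $O(n^{3/2})$ theorem.
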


\begin{proof}From Lemma~\ref{lem:center-general-1} and Lemma~\ref{lem:measure-width} we have
$$\mu^+(\partial A)\ls \mu(A)\frac{n+\ln(\|f\big|_A\|_{\infty}/f(x_A))}{r_A}\ls \big(n+\ln(\|f\big|_A\|_{\infty}/f(x_A))\big)\,\frac{w_A}{r_A}.$$
The claim follows from Theorem~\ref{th:steinhagen}.
\end{proof}

\begin{corollary}\label{cor:perimeter-subset-Rn}Let $\mu$ be an isotropic log-concave probability measure on $\mathbb{R}^n$. For every  
convex body $A\subset\mathbb{R}^n$ with $A\subseteq R_{6n}(\mu)$, 
$$\mu^+(\partial A)\ls 14n^{3/2}.$$
\end{corollary}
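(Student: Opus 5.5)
We will deduce the corollary directly from Proposition~\ref{prop:perimeter-general-1}. The only remaining task is to bound the logarithmic term $\ln(\|f|_A\|_{\infty}/f(x_A))$ when $A\subseteq R_{6n}(\mu)$.

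The key observation is that the inball centre $x_A$ lies in $A$. Hence $x_A\in R_{6n}(\mu)$, which by the definition of the super-level set gives
$$f(x_A)\gr e^{-6n}\|f\|_{\infty}.$$
On the other hand, trivially $\|f|_A\|_{\infty}\ls \|f\|_{\infty}$. Therefore
$$\ln\bigl(\|f|_A\|_{\infty}/f(x_A)\bigr)\ls \ln\bigl(\|f\|_{\infty}/(e^{-6n}\|f\|_{\infty})\bigr)=6n.$$

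Substituting this into Proposition~\ref{prop:perimeter-general-1} gives
$$\mu^+(\partial A)\ls 2\sqrt{n}\,(n+6n)=14n^{3/2},$$
which is the claimed bound.

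There is essentially no obstacle. The only point to check is that $x_A\in A$ (the inball $x_A+r_AB_2^n$ is contained in $A$), so that $x_A$ inherits the density lower bound from $R_{6n}(\mu)$. Note also that $\mu$ is isotropic, so Lemma~\ref{lem:measure-width}, which Proposition~\ref{prop:perimeter-general-1} relies on, applies. No dimension restriction such as $n\gr 3$ is needed here, since Lemma~\ref{lem:inradius} is not used.
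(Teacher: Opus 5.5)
Your proposal is correct and is essentially the paper's own proof. You apply Proposition~\ref{prop:perimeter-general-1} and use $x_A\in A\subseteq R_{6n}(\mu)$ to get $\|f|_A\|_{\infty}/f(x_A)\ls e^{6n}$, which gives $\mu^+(\partial A)\ls 2\sqrt{n}\,(n+6n)=14n^{3/2}$.
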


\begin{proof}From Proposition~\ref{prop:perimeter-general-1} we have that 
$$\mu^+(\partial A)\ls 2\sqrt{n}\big(n+\ln(\|f\big|_A\|_{\infty}/f(x_A))\big),$$
where $x_A$ is the center of the ball of radius $r_A$ inscribed in $A$. Since $x_A\in R_{6n}(\mu)$ we also have that $\|f\big|_A\|_{\infty}/f(x_A)\ls e^{6n}$. Therefore,
$$\mu^+(\partial A)\ls 2\sqrt{n}\big(n+6n\big)=14n\sqrt{n}$$
as claimed.
\end{proof}

For the general case, where $A$ may not be contained in $R_{6n}(\mu)$, and in particular $f(x_A)$ may be arbitrarily small, we combine our approach with some
of the ideas developed by Livshyts in \cite{Livshyts-2021}.

\begin{theorem}\label{th:perimeter}Let $\mu$ be an isotropic log-concave probability measure on $\mathbb{R}^n$. For every  
convex body $A\subset\mathbb{R}^n$, 
$$\mu^+(\partial A)\ls Cn^{3/2},$$
where $C>0$ is an absolute constant.
\end{theorem}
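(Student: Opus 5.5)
The plan is to split $\partial A$ according to the level set $R:=R_{6n}(\mu)$:
$$\mu^+(\partial A)=\int_{\partial A\cap R}f\,d\lambda+\int_{\partial A\setminus R}f\,d\lambda .$$
The inner part will be handled by Corollary~\ref{cor:perimeter-subset-Rn}, applied to the convex body $A\cap R$. The outer part will be handled by a direct density bound, in the spirit of Livshyts.

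\emph{Inner part.} Since $A\cap R\subseteq A$ is convex, we have $\partial A\cap R\subseteq\partial(A\cap R)$. Hence
$$\int_{\partial A\cap R}f\,d\lambda\ls\mu^+(\partial(A\cap R))\ls 14n^{3/2}.$$
This applies whenever $A\cap R$ has nonempty interior. If it does not, the inner part is zero, up to a null boundary piece that we handle by approximation. We assume $n\gr 3$; small $n$ are absorbed into the constant.

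\emph{Outer part.} On $\partial A\setminus R$ we have $f<e^{-6n}\|f\|_\infty\ls e^{-6n}C^n$, using $\|f\|_\infty=L_f^n\ls C^n$. The task is therefore to bound the surface measure of $\partial A$ outside $R$, weighted by $f$. A crude surface-area bound is useless because $A$ may be huge, so we stratify by levels. For $t\gr 6n$, the pieces $\partial A\cap(R_{t+1}\setminus R_t)$ carry density at most $e^{-t}\|f\|_\infty$. Since $A\cap R_{t+1}$ is convex and contained in $R_{t+1}$, monotonicity of surface area under inclusion of convex bodies gives
$$\lambda\big(\partial A\cap R_{t+1}\big)\ls S(A\cap R_{t+1})\ls S(R_{t+1}).$$
Next we bound $S(R_s)$ via \eqref{eq:surface-inradius}: $S(R_s)\ls n\,\mathrm{vol}_n(R_s)/r(R_s)$. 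Here $r(R_s)\gr r(R_{6n})\gr 1/3$ by Lemma~\ref{lem:inradius} and monotonicity. For the volume, $\mathrm{vol}_n(R_s)\ls e^{s}\mu(R_s)/\|f\|_\infty\ls e^s/\|f\|_\infty$.

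This yields a contribution
$$\sum_{k\gr 0}e^{-6n-k}\|f\|_\infty\cdot 3n\,e^{6n+k+1}/\|f\|_\infty ,$$
which is a divergent sum of order $n$ per level, so it is too lossy. To recover decay, I would instead bound the volume using the tail of Lemma~\ref{lem:rapid} together with the concavity \eqref{eq:R-concavity}. After translating so that the maximum point is at the origin (for this step only; the barycenter issue costs only $e^n$ via \eqref{eq:frad-2}), we have $R_s\subseteq (s/6n)R_{6n}$. This gives
$$\mathrm{vol}_n(R_s)\ls (s/6n)^n\,\mathrm{vol}_n(R_{6n})\ls (s/6n)^n e^{6n}/\|f\|_\infty .$$
Then $\sum_{s\gr 6n}e^{-s}\|f\|_\infty\, n\,(s/6n)^n e^{6n}/\|f\|_\infty\cdot 3$ is dominated by $n\sum_k e^{-k}(1+k/6n)^n\ls n\sum_k e^{-5k/6}\ls Cn$. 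Note that the inradius of $R_s$ relative to the origin must also be tracked after the translation; Remark~\ref{rem:18}-type bounds give $\gr c$.

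Adding the two parts gives $\mu^+(\partial A)\ls 14n^{3/2}+Cn$.

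The main obstacle is the outer part: making the level-by-level surface comparison rigorous (the sets $\partial A\cap(R_{t+1}\setminus R_t)$ versus $S(A\cap R_{t+1})$) and checking that the growth $(s/6n)^n$ of $\mathrm{vol}_n(R_s)$ is beaten by $e^{-s}$. This step is exactly where I expect to need Livshyts' comparison of $\partial A$ with the boundaries of the level sets. If the discrete stratification is too lossy, I would first try the continuous co-area version, $\int_{\partial A\setminus R}f\,d\lambda\ls\int_{6n}^\infty e^{-s}\|f\|_\infty\, S(R_s)\,ds$ (after integration by parts), with the same volume bound.
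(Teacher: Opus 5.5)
Your argument is correct, and its overall structure matches the paper's proof. You use the same split of $\partial A$ along $R_{6n}(\mu)$, the same inner bound via Corollary~\ref{cor:perimeter-subset-Rn} applied to $A\cap R_{6n}(\mu)$, and the same level-set comparison $\lambda(\partial A\cap R_s(\mu))\ls S(R_s(\mu))\ls 3n\,\vol_n(R_s(\mu))$ for $s\gr 6n$.

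The only real difference is how you control the weighted volumes $e^{-s}\|f\|_\infty\vol_n(R_s(\mu))$.
\begin{itemize}
\item The paper writes the outer part as $\int_{6n}^\infty e^{-s}\|f\|_\infty\,\lambda(\partial A\cap R_s(\mu))\,ds$, which is exactly your fallback ``continuous co-area version''. It then uses the layer-cake identity $\int_0^\infty e^{-s}\|f\|_\infty\vol_n(R_s(\mu))\,ds=\|f\|_1=1$. The bound $3n$ follows in one line, with no information about individual level sets.
\item You bound each volume separately. The homothety $x_0+\frac{6n}{s}(R_s(\mu)-x_0)\subseteq R_{6n}(\mu)$ about a maximum point $x_0$, together with $\vol_n(R_{6n}(\mu))\ls e^{6n}/\|f\|_\infty$, reduces the sum to $\sum_k e^{-k}(1+(k+1)/6n)^n\ls e^{1/6}\sum_k e^{-5k/6}$.
\item Your route gives a slightly worse constant. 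In exchange it yields an explicit growth bound for $\vol_n(R_s(\mu))$, which the paper's argument never needs.
\end{itemize}
Your first attempt failed because bounding each $\vol_n(R_s(\mu))$ by $e^s/\|f\|_\infty$ spends the total mass once per level. The layer-cake identity spends it once for all levels together.

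Two clarifications.

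First, moving to $x_0$ costs nothing. The sets $R_t(\mu)$ are defined relative to $\|f\|_\infty$, the homothety inclusion holds exactly about $x_0$, and volume is translation invariant. You also do not need any inradius about $x_0$: \eqref{eq:surface-inradius} is applied with the inradius about the origin, where Lemma~\ref{lem:inradius} gives $r\gr 1/3$. This is worth stating explicitly. A genuine multiplicative $e^n$ loss in the outer part, as your parenthetical remark suggests, would destroy the bound.

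Second, if $A\cap R_{6n}(\mu)$ has empty interior, the inner part need not vanish; for instance, $A$ may touch $R_{6n}(\mu)$ along a common facet. In that case $A\cap R_{6n}(\mu)\subseteq\partial R_{6n}(\mu)$. Corollary~\ref{cor:perimeter-subset-Rn} applied to $R_{6n}(\mu)$ itself then gives the same $14n^{3/2}$ bound.
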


\begin{proof}
Fix an arbitrary convex body $A\subset\mathbb{R}^n$. Note that if $A\subseteq R_{6n}(\mu)$ then the result follows from
Corollary~\ref{cor:perimeter-subset-Rn}. Otherwise, $A\setminus R_{6n}(\mu)\neq\varnothing$.  
 
We decompose $\partial A$ as follows:
$$\partial A=(\partial A\cap R_{6n}(\mu))\cup (\partial A\setminus R_{6n}(\mu)).$$
Note that $\partial A\cap R_{6n}(\mu)\subseteq \partial(A\cap R_{6n}(\mu))$, therefore 
\begin{equation}\label{eq:final-00}\mu^+(\partial A\cap R_{6n}(\mu))\ls \mu^+(\partial(A\cap R_{6n}(\mu)))\ls 14n^{3/2}\end{equation}
by Corollary~\ref{cor:perimeter-subset-Rn} applied to the convex body $A\cap R_{6n}(\mu)\subseteq R_{6n}(\mu)$.

Next, we estimate $\mu^+(\partial A\setminus R_{6n}(\mu))$. Note that
\begin{align}\label{eq:ell1-norm0}\|f\|_1 &=\int_0^{\|f\|_{\infty}}\vol_n(\{x\in\mathbb{R}^n:f(x)\gr t\})\,dt\\
\nonumber &=\int_0^{\infty}e^{-s}\|f\|_{\infty}\vol_n(\{x\in\mathbb{R}^n:f(x)\gr e^{-s}\|f\|_{\infty}\})\,ds\\
\nonumber &=\int_0^{\infty}e^{-s}\|f\|_{\infty}\vol_n(R_s(\mu))\,ds.\end{align}
We write
\begin{align*}
\mu^+(\partial A\setminus R_{6n}(\mu)) &=\int_{\partial A\setminus R_{6n}(\mu)}f(x)\,d\lambda(x)\\
&=\int_{\partial A\setminus R_{6n}(\mu)}\int_0^{\infty}\mathds{1}_{\{(x,t):f(x)\gr t\}}(x,t)\,dt\,d\lambda(x)\\
&=\int_0^{e^{-6n}\|f\|_{\infty}}\lambda (\partial A\cap \{x:e^{-6n}\|f\|_{\infty}\gr f(x)\gr t\})\,dt\\
&\ls\int_0^{e^{-6n}\|f\|_{\infty}}\lambda (\partial A\cap \{x:f(x)\gr t\})\,dt\\
&=\int_{6n}^{\infty}e^{-s}\|f\|_{\infty}\lambda (\partial A\cap R_s(\mu))\,ds,
\end{align*}
making the change of variables $t=e^{-s}\|f\|_{\infty}$. Since $\partial A\cap R_s(\mu)\subseteq \partial (A\cap R_s(\mu))$, and also $A\cap R_s(\mu)\subseteq R_s(\mu)$
and both sets are convex, using the monotonicity of 
Lebesgue surface area of convex bodies with respect to inclusion,
as well as \eqref{eq:surface-inradius} and Lemma~\ref{lem:inradius}, we get
$$\lambda (\partial A\cap R_s(\mu))\ls  \lambda (A\cap R_s(\mu))=S(A\cap R_s(\mu))\ls S(R_s(\mu))\ls \frac{n \vol_n(R_s(\mu))}{r(R_s(\mu))}
\ls 3n\vol_n(R_s(\mu)).$$
It follows that
\begin{equation}\label{eq:final-11}
\mu^+(\partial A\setminus R_{6n}(\mu))\ls 3n\int_{6n}^{\infty}e^{-s}\|f\|_{\infty}\vol_n(R_s(\mu))\,ds
\ls 3n\int_0^{\infty}e^{-s}\|f\|_{\infty}\vol_n(R_s(\mu))\,ds=3n\|f\|_1\end{equation}
using \eqref{eq:ell1-norm0} in the end. Combining \eqref{eq:final-11} with \eqref{eq:final-00} we obtain
$$\mu^+(\partial A)\ls 14n^{3/2}+3n\,\|f\|_1=(14+3/\sqrt{n})n^{3/2}.$$
This completes the proof of the theorem. \end{proof}

\section{Improved bounds under additional structure}\label{section:5}

In this section we show that the estimate $O(n^{3/2})$ in Theorem~\ref{th:main-upper-bound} can be improved under additional geometric
assumptions on the measure. More precisely, we establish linear bounds of order $O(n)$ for several natural classes of isotropic log-concave measures.

Besides the results of the previous section, we shall exploit a general upper estimate for $\Gamma(\mu)$ due to Livshyts \cite{Livshyts-2021}. 
For any integrable function $f$ whose level sets $R_t(\mu)$ are convex, Livshyts established the following result.

\begin{theorem}[Livshyts]\label{th:livshyts-1}
Let $f:\mathbb{R}^n\to [0,\infty)$ be an integrable function such that $R_t(\mu)$ is convex for all $t\gr 0$. 
Then
$$\Gamma(\mu)\ls\inf\left\{n\,\frac{\|f\|_{\infty}\,\vol_n(R_t(\mu))+\|f\|_1}{r(R_t(\mu))}:t>0\right\}$$
where $\mu$ is the measure on $\mathbb{R}^n$ with density $f$.
\end{theorem}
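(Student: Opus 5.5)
We follow the layer-cake strategy from the proof of Theorem~\ref{th:perimeter}, but keep every level $t>0$ free instead of splitting at the fixed level $6n$. Fix $t>0$ and a convex body $A\subset\mathbb{R}^n$. Split the boundary as
$$\partial A=(\partial A\cap R_t(\mu))\cup(\partial A\setminus R_t(\mu)),$$
and estimate the two pieces separately. Only the convexity of the level sets $R_s(\mu)$ will be used, not log-concavity of $f$. Write $R_s=R_s(\mu)$ for brevity. We may assume $0\in\mathrm{int}(R_t)$; otherwise $r(R_t)=0$ and the bound for that $t$ is trivial. If $R_t$ is unbounded, then $\vol_n(R_t)=\infty$ since $f$ is integrable, and again the bound is trivial.

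\emph{Inner piece.} On $R_t$ we use the crude bound $f\ls\|f\|_\infty$. Then
$$\mu^+(\partial A\cap R_t)\ls \|f\|_\infty\,\lambda(\partial A\cap R_t)\ls \|f\|_\infty\, S(A\cap R_t),$$
because $\partial A\cap R_t\subseteq\partial(A\cap R_t)$. Surface area is monotone under inclusion of convex bodies, so $S(A\cap R_t)\ls S(R_t)$. By \eqref{eq:surface-inradius}, $S(R_t)\ls n\vol_n(R_t)/r(R_t)$. This gives the term $n\|f\|_\infty\vol_n(R_t)/r(R_t)$.

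\emph{Outer piece.} We repeat the computation in the proof of Theorem~\ref{th:perimeter} with $6n$ replaced by $t$:
$$\mu^+(\partial A\setminus R_t)\ls\int_t^\infty e^{-s}\|f\|_\infty\,\lambda(\partial A\cap R_s)\,ds\ls \int_t^\infty e^{-s}\|f\|_\infty\,\frac{n\vol_n(R_s)}{r(R_s)}\,ds.$$
The second inequality again uses monotonicity of surface area and \eqref{eq:surface-inradius}. Since $r$ is defined with respect to the origin and the family $R_s$ is increasing, $r(R_s)\gr r(R_t)$ for all $s\gr t$. Extending the integral to $(0,\infty)$ and applying the identity \eqref{eq:ell1-norm0} bounds the outer piece by $n\|f\|_1/r(R_t)$.

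Adding the two pieces gives
$$\mu^+(\partial A)\ls n\,\frac{\|f\|_\infty\vol_n(R_t)+\|f\|_1}{r(R_t)}.$$
Taking the supremum over convex bodies $A$ and then the infimum over $t>0$ yields the claim.

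The main technical point is justifying $\mu^+(\partial A)=\int_{\partial A}f\,d\lambda$ and the Fubini exchange when $f$ is merely integrable with convex level sets. The first step would be to note that $f$ is then quasi-concave, hence continuous almost everywhere and bounded on bounded sets. Both the Minkowski content and the coarea-type layer decomposition can then be handled by approximating $A$ from outside by $A+\epsilon B_2^n$ and letting $\epsilon\to0$, using monotone convergence on the layers $R_s$.
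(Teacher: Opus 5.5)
Your argument is correct. The paper does not reprove this statement (it quotes it from Livshyts), but your proof is exactly the paper's layer-cake computation from the proof of Theorem~\ref{th:perimeter}: the cut-off $6n$ becomes a free level $t$, the monotonicity $r(R_s(\mu))\gr r(R_t(\mu))$ for $s\gr t$ takes the place of Lemma~\ref{lem:inradius}, and the inner piece is bounded crudely by $\|f\|_\infty S(R_t(\mu))$. On the technical point you flag, almost-everywhere continuity cannot help because $\partial A$ is Lebesgue-null, and the identity $\mu^+(\partial A)=\int_{\partial A}f\,d\lambda$ can actually fail for such $f$ (take $f=2^{-n}\mathds{1}_{\mathrm{int}([-1,1]^n)}$ and $A=[1,2]\times[-1,1]^{n-1}$); your outer-approximation idea is the right fix if you apply your bound to every convex body $A+sB_2^n$ and average via the coarea formula $\mu((A+\epsilon B_2^n)\setminus A)=\int_0^{\epsilon}\int_{\partial(A+sB_2^n)}f\,d\lambda\,ds$.
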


For isotropic log-concave functions $f$, we have $\|f\|_{\infty} = L_f^n$ and $\|f\|_1 = 1$.
Thus, to obtain an upper bound for $\Gamma(\mu)$, it suffices to choose $t>0$ such that $\vol_n(R_t(\mu))$ is suitably bounded from above 
and $r(R_t(\mu))$ is suitably bounded from below. 
In the isotropic log-concave case, Livshyts' method yields $\Gamma(\mu) = O(n^2)$ for an appropriate choice of $t$.

\bigskip

\noindent {\bf \S 5.1. Uniform measures on convex bodies.} 
We begin with the case of uniform measures on convex bodies. 

\begin{proposition}\label{prop:main-estimate-body-case}
Let $K$ be an isotropic convex body in $\mathbb{R}^n$ and let $\mu_K$ be the corresponding isotropic probability measure
with density $f_K:=L_K^n\mathds{1}_{K/L_K}$. Then
\begin{equation}\label{eq:uniform}
\Gamma(\mu_K)=L_KS(K).
\end{equation}
In particular, $\Gamma(\mu_K)\ls \sqrt{\frac{n}{n+2}}\,n$.
\end{proposition}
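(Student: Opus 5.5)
The identity $\Gamma(\mu_K)=L_KS(K)$ will follow from matching upper and lower bounds, using only monotonicity of surface area under inclusion for convex bodies and the scaling of surface measure. Set $K'=K/L_K$, the support of $\mu_K$, on which the density equals the constant $L_K^n$. For a convex body $A$, the measure $\mu_K^+(\partial A)$ is the Minkowski content of $\partial A$ weighted by $L_K^n\mathds{1}_{K'}$. I will verify that this is at most $L_K^n\,S(A\cap K')$. Indeed, $\mu_K((A+\epsilon B_2^n)\setminus A)=L_K^n\,\vol_n(((A+\epsilon B_2^n)\setminus A)\cap K')$. Since $K'$ is convex, a point of $((A+\epsilon B_2^n)\setminus A)\cap K'$ lies within distance $\epsilon$ of $A\cap K'$: the segment to its nearest point in $A$ meets $A\cap K'$ near that point, and more simply $(A+\epsilon B_2^n)\cap K'\subseteq (A\cap K')+\epsilon' B_2^n$ with $\epsilon'\to 0$ proportionally, up to lower-order corrections. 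The cleanest route is to bound directly by $\vol_n(((A\cap K')+\epsilon B_2^n)\setminus (A\cap K'))$, whose limit is $S(A\cap K')$.

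The step I expect to be the most delicate is this containment. To make it robust I would first assume $A\cap K'$ has nonempty interior; otherwise $\mu_K^+(\partial A)=0$ up to trivialities. I would then use the formula $\mu^+(\partial A)=\int_{\partial A}f\,d\lambda$ recalled in \S 2.1, which gives directly
$$\mu_K^+(\partial A)=L_K^n\,\lambda(\partial A\cap K')\ls L_K^n\,\lambda(\partial(A\cap K'))=L_K^n\,S(A\cap K').$$
The inequality holds because $\partial A\cap K'\subseteq\partial(A\cap K')$, the same observation used in the proof of Theorem~\ref{th:perimeter}. Monotonicity of surface area for convex bodies then gives $S(A\cap K')\ls S(K')=L_K^{-(n-1)}S(K)$. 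Hence $\mu_K^+(\partial A)\ls L_K^nL_K^{1-n}S(K)=L_KS(K)$.

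For the lower bound I take $A=K'$ itself. Every point of $(K'+\epsilon B_2^n)\setminus K'$ lies outside the support, so the outer Minkowski content of the definition would give $0$. This exposes an ambiguity: the definition as written uses outer enlargement. I would therefore read the perimeter via the boundary-integral formula $\int_{\partial A}f\,d\lambda$ with $f$ taken upper semicontinuous, so that $f=L_K^n$ on $\partial K'$. Alternatively, I would approximate with $A=(1-\eta)K'$, whose perimeter equals $L_K^nS((1-\eta)K')\to L_KS(K)$ as $\eta\to 0$. The approximation is the safer choice since it only needs a supremum, and it yields $\Gamma(\mu_K)\gr L_KS(K)$, establishing \eqref{eq:uniform}.

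Finally, for the bound $\Gamma(\mu_K)\ls\sqrt{n/(n+2)}\,n$, apply \eqref{eq:surface-inradius} to $K$, which has volume $1$. By \eqref{eq:KLS-radius}, $r(K)\gr\sqrt{(n+2)/n}\,L_K$. Hence $S(K)\ls n/r(K)\ls n\sqrt{n/(n+2)}/L_K$, and multiplying by $L_K$ gives the claim.
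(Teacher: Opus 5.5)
Your proof is correct and is essentially the paper's: the same chain $\mu_K^+(\partial A)\ls L_K^n\lambda(\partial A\cap K')\ls L_K^nS(A\cap K')\ls L_K^nS(K')=L_KS(K)$ gives the upper bound, and \eqref{eq:surface-inradius} combined with \eqref{eq:KLS-radius} gives the final estimate. Your lower bound via $A=(1-\eta)K'$ is more careful than the paper's ``equality if $A=L_K^{-1}K$'': under the outer $\liminf$ definition one has $\mu_K^+(\partial K')=0$, and for this upper semicontinuous density the boundary-integral formula of \S 2.1 can only overestimate the outer content, which does not affect your upper bound.
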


\begin{proof}For any convex body $A$ in $\mathbb{R}^n$ we may write
\begin{align*}\mu_K^+(\partial A) &=\int_{\partial A\cap L_K^{-1}K}f_K(x)\,d\lambda(x)\ls \int_{\partial (A\cap L_K^{-1}K)}f_K(x)\,d\lambda(x)\\
&=L_K^nS(A\cap L_K^{-1}K)\ls L_K^nS(L_K^{-1}K)=L_KS(K),\end{align*}
with equality if $A=L_K^{-1}K$. It follows that
$$\Gamma(\mu_K)=L_KS(K).$$ 
For the second claim, using the general bound $S(K)\ls n\,\vol_n(K)/r(K)$ from \eqref{eq:surface-inradius}, 
together with the normalization $\vol_n(K)=1$, we obtain
$$\Gamma(\mu_K)\ls nL_K/r(K),$$
The lower bound $r(K)\gr\sqrt{\frac{n+2}{n}} L_K$ from~\eqref{eq:KLS-radius} completes the proof. \end{proof}

\begin{remark}\label{rem:optimal}\rm The estimate $\Gamma(\mu_K)\ls \sqrt{\frac{n}{n+2}}\,n$ 
for uniform measures on isotropic convex bodies is sharp. It becomes an equality if $K$ is an isotropic
$n$-dimensional regular simplex. Indeed, let $v_0,\dots,v_n\in\mathbb{R}^n$ satisfy
$$|v_i|=1,\qquad \langle v_i,v_j\rangle=-\frac1n\quad\text{if}\;i\neq j,\qquad \sum_{i=0}^n v_i=0,$$
and consider the regular simplex $\Delta_0:=\mathrm{conv}\{v_0,\dots,v_n\}$, which is centered at the origin.  
Let $\Delta:=\beta\Delta_0$ be the homothetic copy of $\Delta_0$ with $\vol_n(\Delta)=1$, so $\beta^n\vol_n(\Delta_0)=1$. 
We compute $\Gamma(\mu_\Delta)=L_{\Delta}S(\Delta)$.

To compute the inradius, note that for each $i$ the facet $F_i:=\mathrm{conv}\{v_j:\ j\neq i\}$ lies in the hyperplane
$H_i:=\{x:\ \langle x,v_i\rangle=-1/n\}$, because for $j\neq i$ one has $\langle v_j,v_i\rangle=-1/n$. Hence, the distance from
the origin to $H_i$ equals $1/n$, and since the simplex is regular, the origin is the incenter of $\Delta_0$. Therefore,
$$r(\Delta_0)=1/n\qquad\text{and}\qquad r(\Delta)=\beta r(\Delta_0)=\beta/n.$$
To compute the isotropic constant, let $X$ be uniform on $\Delta_0$. Using barycentric coordinates, we may write
$$X=\sum_{i=0}^n \Lambda_i v_i,$$
where $\Lambda=(\Lambda_0,\dots,\Lambda_n)$ is uniform on the simplex $\{\lambda_i\gr 0,\ \sum \lambda_i=1\}$.
Then $\mathbb{E}(\Lambda_i)=\frac1{n+1}$ and
$$\mathbb{E}(\Lambda_i^2)=\frac{2}{(n+1)(n+2)},\qquad \mathbb{E}(\Lambda_i\Lambda_j)=\frac{1}{(n+1)(n+2)}\quad(i\neq j).$$
Since $\sum_{i=0}^n v_i=0$, we have $\mathbb{E}(X)=0$ and
$$\mathrm{Cov}(X)=\mathbb{E}\left(XX^{\mathsf T}\right)=\frac{1}{(n+1)(n+2)}\sum_{i=0}^n v_i v_i^{\mathsf T}.$$
Thus $\Delta$ is isotropic with
$$L_{\Delta}=\frac{\beta}{\sqrt{n(n+2)}}\qquad\text{and hence}\qquad \frac{L_K}{r(K)}=\sqrt{\frac{n}{n+2}}.$$
Finally, for the surface area and $\Gamma(\mu_\Delta)$, note that for any simplex whose incenter is at the origin with inradius $r$, one has the exact identity
$$\vol_n(\Delta)=\frac{r}{n}\,S(\Delta),$$
and since $\vol_n(\Delta)=1$ we obtain $S(\Delta)=\frac{n}{r(\Delta)}$. Therefore,
$$\Gamma(\mu_\Delta)=L_\Delta S(\Delta)=L_\Delta\frac{n}{r(K)}=n\,\frac{L_\Delta}{r(\Delta)}
= \sqrt{\frac{n}{n+2}}\,n.$$
This shows that the estimate in Proposition~\ref{prop:main-estimate-body-case} is optimal.
\end{remark}

\medskip

\noindent {\bf \S 5.2. Measures with homogeneous level sets.}
Let $\mu$ be a log-concave probability measure on $\mathbb{R}^n$ with a geometric log-concave density $f$ satisfying
\begin{equation}\label{eq:homogeneous}
R_t(\mu)=t^{1/p}K \qquad \text{for all } t>0,
\end{equation}
where $K\subset\mathbb{R}^n$ is a fixed convex body containing the origin in its interior and $p\gr 1$.

Applying Theorem~\ref{th:livshyts-1}, we obtain for any $t>0$,
$$\Gamma(\mu)\ls n\,\frac{\|f\|_{\infty}\,\vol_n(R_t(\mu))+1}{r(R_t(\mu))},$$
where we used $\|f\|_1=1$.
We choose $t$ as large as possible under the constraint $\|f\|_{\infty}\vol_n(R_t(\mu))\ls 1$. 
Choosing $t=n/e^p$, we observe that $R_{n/e^p}(\mu)=e^{-1}R_n(\mu)$, and Markov's inequality yields
$$\|f\|_{\infty}\vol_n(R_{n/e^p}(\mu))=\|f\|_{\infty}e^{-n} \vol_n(R_n(\mu)) \ls \mu(R_n(\mu))\ls 1.$$
Since $r(R_{n/e^p}(\mu))=e^{-1}r(R_n(\mu))$ and $r(R_n(\mu))\gr 1/18$ (by Remark~\ref{rem:18}), we conclude that 
$$\Gamma(\mu)\ls \frac{2n}{r(R_{n/e^p}(\mu))}\ls \frac{2en}{r(R_n(\mu))}\ls 36en.$$

\begin{remark}\rm A canonical example is provided by an isotropic measure $\nu_K$ with density
$f_{\nu_K}(x)=c_K e^{-\|x\|_K^p}$, where $K$ is a symmetric convex body and $c_K=(c_{p,n}\vol_n(K))^{-1}$ 
with $c_{p,n}=\int_0^{\infty}pt^{n+p-1}e^{-t^p}dt$. Note that
$$R_n(f_{\nu_K})=n^{1/p}K.$$
Since $\nu_K$ is even and geometric, this implies that $\overline{K}$ has bounded geometric distance from an almost isotropic convex body
in the sense of \cite[Definition~2.5.11]{BGVV-book}: there exists
an almost isotropic convex body $D\subset \mathbb{R}^n$ such that
$$c_1D\subseteq \overline{K}\subseteq c_2D,$$
where $c_1,c_2>0$ are absolute constants (for a proof see \cite[Lemma~3.3]{Giannopoulos-Tziotziou-2025b}).
\end{remark}

\medskip 

\noindent {\bf \S 5.3. $1$-unconditional measures.}
Let $\mu$ be an isotropic log-concave probability measure on $\mathbb{R}^n$ with density $f$ which is $1$-symmetric, meaning that 
$$f(x_1,\ldots,x_n)=f(\epsilon_1 x_{\sigma(1)},\ldots,\epsilon_n x_{\sigma(n)})$$ for all choices of signs $\epsilon_i\in\{-1,1\}$ and all permutations $\sigma$ of $\{1,\ldots,n\}$. 
The symmetry of $f$ implies that each level set $R_t(\mu)$ is a $1$-symmetric convex body.
In particular, $R_t(\mu)$ is in John's position: the maximal-volume ellipsoid contained in $R_t(\mu)$ is the Euclidean ball $r(R_t(\mu))B_2^n$. 
Indeed, the maximal volume ellipsoid of $R_t(\mu)$ is also $1$-symmetric, which forces it to be a centered ball.
K.~Ball's volume ratio theorem~\cite{Ball-1989} then yields
$$\frac{\vol_n(R_t(\mu))^{1/n}}{r(R_t(\mu))}\ls \vol_n(Q_n)^{1/n}=2,$$
where $Q_n=[-1,1]^n$ is the cube with inradius $r(Q_n)=1$.
Consequently,
$$\vol_n(R_t(\mu))^{1/n}\ls 2r(R_t(\mu)) \qquad \text{for all } t>0.$$
This allows us to apply Theorem~\ref{th:livshyts-1} with a suitable choice of $t$ and obtain the bound
$$\Gamma(\mu)\ls Cn$$
for an absolute constant $C>0$, already observed in~\cite{Livshyts-2021}.

We now obtain a similar, linear-in-the-dimension bound for the maximal perimeter of any $1$-unconditional isotropic log-concave
probability measure. Let $\mu$ be an isotropic log-concave probability measure on $\mathbb{R}^n$ with density $f$ which is $1$-unconditional, 
meaning that $$f(x_1,\ldots,x_n)=f(\epsilon_1 x_1,\ldots,\epsilon_n x_n)$$ for all choices of signs $\epsilon_i\in\{-1,1\}$. 
For $k=1,\dots,n$, let $e_k$ be the $k$-th standard basis vector and consider the line segment
$$I_k:= [-e_k,e_k].$$
Recall that $B_2^n\subset B_\infty^n$, and that 
$$B_\infty^n = I_1+\cdots+I_n$$ is the Minkowski sum of the segments $I_k$.
We begin with the following simple telescoping lemma.

\begin{lemma}\label{lem:telescoping}
For any Borel set $A\subset\mathbb{R}^n$ and any $\epsilon>0$, define
$$A_0:=A,\qquad A_k:=A_{k-1}+\epsilon I_k\qquad(k=1,\dots,n).$$
Then $A_n=A+\epsilon B_\infty^n$, and
$$\mu(A+\epsilon B_2^n)-\mu(A)\ls \mu(A+\epsilon B_\infty^n)-\mu(A)=\sum_{k=1}^n \mu\big((A_{k-1}+\epsilon I_k)\setminus A_{k-1}\big).$$
\end{lemma}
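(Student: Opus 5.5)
The statement splits into three elementary pieces, and I would check them in this order: the identity $A_n=A+\epsilon B_\infty^n$, the inequality coming from $B_2^n\subset B_\infty^n$, and the telescoping sum.

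\emph{The identity $A_n=A+\epsilon B_\infty^n$.} Minkowski addition of sets is associative and commutative, and $\epsilon(X+Y)=\epsilon X+\epsilon Y$. Unwinding the recursion therefore gives
$$A_n=A+\epsilon I_1+\cdots+\epsilon I_n=A+\epsilon(I_1+\cdots+I_n).$$
It remains to use $B_\infty^n=I_1+\cdots+I_n$, which holds because every $y\in[-1,1]^n$ can be written as $\sum_k y_ke_k$ with $y_ke_k\in I_k$, and conversely every such sum lies in $[-1,1]^n$. No convexity of $A$ is needed, only that the sets involved are measurable.

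\emph{Measurability.} $A_k$ is the image of the Borel set $A\times\prod_{j\ls k}\epsilon I_j$ under a continuous map, so it is analytic and hence universally measurable. This is enough for $\mu(A_k)$ to make sense. In the application $A$ will be a convex body, and then all the $A_k$ are compact, so the issue does not arise there.

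\emph{The inequality.} Since $B_2^n\subset B_\infty^n$, we have $A+\epsilon B_2^n\subseteq A+\epsilon B_\infty^n$. Monotonicity of $\mu$ gives $\mu(A+\epsilon B_2^n)\ls\mu(A+\epsilon B_\infty^n)$, and subtracting $\mu(A)$ from both sides (finite, since $\mu$ is a probability measure) yields the first inequality.

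\emph{The telescoping sum.} Since $0\in I_k$, we have $A_{k-1}\subseteq A_{k-1}+\epsilon I_k=A_k$. Hence
$$\mu(A_k)-\mu(A_{k-1})=\mu(A_k\setminus A_{k-1})=\mu\big((A_{k-1}+\epsilon I_k)\setminus A_{k-1}\big).$$
Summing over $k=1,\dots,n$, the left-hand sides telescope to $\mu(A_n)-\mu(A_0)=\mu(A+\epsilon B_\infty^n)-\mu(A)$, which is the stated equality.

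No step is a real obstacle; the only point needing any care is the measurability remark above.
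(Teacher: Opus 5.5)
Your proposal is correct and follows the paper's proof: the inclusion $B_2^n\subset B_\infty^n$, the decomposition $B_\infty^n=I_1+\cdots+I_n$, and telescoping over the nested sets $A_{k-1}\subseteq A_k$. Two points the paper leaves implicit are the nesting $A_{k-1}\subseteq A_k$ (because $0\in I_k$) and the fact that the $A_k$ are only analytic, hence universally measurable, rather than Borel; you state both explicitly, and neither changes the argument.
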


\begin{proof}
Since $B_2^n\subset B_\infty^n$ we have $A+\epsilon B_2^n\subset A+\epsilon B_\infty^n$, and hence
$\mu(A+\epsilon B_2^n)-\mu(A)\ls \mu(A+\epsilon B_\infty^n)-\mu(A)$.
Moreover, since $B_\infty^n=I_1+\cdots+I_n$, we have $A+\epsilon B_\infty^n=A_n$ by construction.
Finally, the telescoping identity
\begin{equation}\label{eq:telescope}\mu(A_n)-\mu(A_0)=\sum_{k=1}^n \big(\mu(A_k)-\mu(A_{k-1})\big)=\sum_{k=1}^n \mu\big(A_k\setminus A_{k-1}\big)\end{equation}
gives the claim, since $A_k=A_{k-1}+\epsilon I_k$.
\end{proof}

In what follows, for $k\in\{1,\dots,n\}$ we define the $k$-th marginal density
$$g_k(t):=\int_{\mathbb{R}^{n-1}} f(y,t)\,dy,$$
where we write $x=(y,t)$ with $t=x_k$ and $y\in\mathbb{R}^{n-1}$ collecting the remaining coordinates.

\begin{lemma}\label{lem:fiber}Assume that $f$ is unconditional and log-concave. Then for every convex set
$B\subset\mathbb{R}^n$, every $k\in\{1,\dots,n\}$, and every $\epsilon>0$,
$$\mu\big((B+\epsilon I_k)\setminus B\big)\ls 2\epsilon g_k(0).$$
\end{lemma}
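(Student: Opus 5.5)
Fix $k$ and write $x=(y,t)$ with $t=x_k$, $y\in\mathbb{R}^{n-1}$. The plan is to reduce the claim to a one-dimensional estimate on each line parallel to $e_k$ and then integrate over $y$ using Fubini. For fixed $y$, the fiber $B_y:=\{t:(y,t)\in B\}$ is an interval $[a_y,b_y]$, possibly empty, since $B$ is convex. The fiber of $B+\epsilon I_k$ contains $[a_y-\epsilon,b_y+\epsilon]$. It can be strictly larger, because $B+\epsilon I_k$ also contains points $(y,t)$ where $(y,t\pm s)\in B$ for some $|s|\ls\epsilon$. But adding a segment in direction $e_k$ only translates along that direction, so the $y$-coordinate is unchanged. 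Hence the fiber of $B+\epsilon I_k$ over $y$ is exactly $B_y+[-\epsilon,\epsilon]=[a_y-\epsilon,b_y+\epsilon]$ when $B_y\neq\varnothing$, and empty otherwise.

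It follows that $(B+\epsilon I_k)\setminus B$ has fiber $[a_y-\epsilon,a_y)\cup(b_y,b_y+\epsilon]$ over each $y$ with $B_y\neq\varnothing$. Fubini then gives
$$\mu\big((B+\epsilon I_k)\setminus B\big)=\int_{\mathbb{R}^{n-1}}\left(\int_{a_y-\epsilon}^{a_y}f(y,t)\,dt+\int_{b_y}^{b_y+\epsilon}f(y,t)\,dt\right)dy,$$
with the integrand set to $0$ when $B_y$ is empty.

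The key pointwise input is unconditionality combined with log-concavity. For fixed $y$, the function $t\mapsto f(y,t)$ is log-concave and even in $t$, since flipping the sign of $x_k$ leaves $f$ invariant. An even log-concave function on $\mathbb{R}$ is maximized at $0$ and nonincreasing in $|t|$. Therefore $f(y,t)\ls f(y,0)$ for all $t$, and each of the two inner integrals is at most $\epsilon f(y,0)$. Integrating over $y$ gives
$$\mu\big((B+\epsilon I_k)\setminus B\big)\ls 2\epsilon\int_{\mathbb{R}^{n-1}}f(y,0)\,dy=2\epsilon g_k(0).$$

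The only step requiring care is the exact description of the fibers of $B+\epsilon I_k$; everything else is Fubini. One should also check measurability, which is fine since all the sets involved are convex. The endpoints $a_y,b_y$ depend measurably on $y$: they are the lower and upper fiber boundaries of a convex set, which are convex and concave functions respectively. This proof does not actually need the convexity of $B$ beyond knowing each fiber is an interval. Even without it, a fiber with finitely many components would give a worse constant, so convexity is exactly what yields the factor $2$.
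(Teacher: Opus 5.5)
Your proof is correct and is essentially the paper's argument: the fibers of $B$ are intervals, the fiber of $B+\epsilon I_k$ is $B_y+[-\epsilon,\epsilon]$, Fubini reduces the estimate to two end-segments of length $\epsilon$ per fiber, and evenness plus log-concavity of $t\mapsto f(y,t)$ gives $f(y,t)\le f(y,0)$, hence the bound $2\epsilon g_k(0)$. Two small blemishes, neither affecting correctness: your remark that the fiber ``can be strictly larger'' is a false start that you then rightly contradict, and for unbounded or non-closed fibers your displayed equality becomes an inequality (the difference fiber lies in $[a_y-\epsilon,a_y]\cup[b_y,b_y+\epsilon]$, with a piece dropped at an infinite endpoint), which is all the argument needs.
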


\begin{proof}Fix $k$ and write $x=(y,t)$ with $t=x_k$. For each $y\in\mathbb{R}^{n-1}$, the fiber
$$B_y:=\{t\in\mathbb{R}:\ (y,t)\in B\}$$
is an interval, since $B$ is convex. If $B_y=[a(y),b(y)]$ then $(B+\epsilon I_k)_y=[a(y)-\epsilon,b(y)+\epsilon]$, and hence
$$\big((B+\epsilon I_k)\setminus B\big)_y \subset [a(y)-\epsilon,a(y)]\ \cup\ [b(y),b(y)+\epsilon],$$
with the obvious modifications if one of the endpoints is infinite.
	
By Fubini's theorem,
\begin{align*}
\mu\big((B+\epsilon I_k)\setminus B\big)&=\int_{\mathbb{R}^{n-1}} \int_{\big((B+\epsilon I_k)\setminus B\big)_y} f(y,t)\,dt\,dy \\
&\ls \int_{\mathbb{R}^{n-1}}\left(\int_{a(y)-\epsilon}^{a(y)} f(y,t)\,dt+\int_{b(y)}^{b(y)+\epsilon} f(y,t)\,dt\right)\,dy.
\end{align*}
For fixed $y$, the function $t\mapsto f(y,t)$ is log-concave and even, 
by unconditionality in the $k$-th coordinate. Hence it attains its maximum at $t=0$, so $f(y,t)\ls f(y,0)$ for all $t$.
Therefore,
$$\int_{a(y)-\epsilon}^{a(y)} f(y,t)\,dt+\int_{b(y)}^{b(y)+\epsilon} f(y,t)\,dt \ls 2\epsilon\, f(y,0).$$
Integrating in $y$ yields
$$\mu\big((B+\epsilon I_k)\setminus B\big)\ls 2\epsilon \int_{\mathbb{R}^{n-1}} f(y,0)\,dy = 2\epsilon\, g_k(0),$$
as claimed.
\end{proof}

\begin{theorem}\label{th:unconditional}
Let $\mu$ be an isotropic unconditional log-concave probability measure on $\mathbb{R}^n$
with continuous density $f$. Then for every convex set $A\subset\mathbb{R}^n$ and every $\epsilon>0$,
$$\mu\big((A+\epsilon B_2^n)\setminus A\big)\ls\sqrt{2}n\epsilon.$$
Consequently, $\mu^+(\partial A)\ls\sqrt{2}n$ for every convex set $A\subset\mathbb{R}^n$, 
and hence $\Gamma(\mu)\ls \sqrt{2}n$.
\end{theorem}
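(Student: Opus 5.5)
The plan is to combine the telescoping Lemma~\ref{lem:telescoping} with the fiber estimate of Lemma~\ref{lem:fiber}, and then bound the marginal values $g_k(0)$ using the isotropic one-dimensional estimate \eqref{eq:dim-12}.

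\textbf{Step 1 (telescoping).} Fix a convex set $A$ and $\epsilon>0$, and define $A_0=A$ and $A_k=A_{k-1}+\epsilon I_k$ as in Lemma~\ref{lem:telescoping}. Each $A_{k-1}$ is a Minkowski sum of convex sets, hence convex. Lemma~\ref{lem:telescoping} then gives
$$\mu\big((A+\epsilon B_2^n)\setminus A\big)\ls \sum_{k=1}^n \mu\big((A_{k-1}+\epsilon I_k)\setminus A_{k-1}\big).$$

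\textbf{Step 2 (fiber estimate).} Apply Lemma~\ref{lem:fiber} to each term with $B=A_{k-1}$. This bounds the $k$-th term by $2\epsilon g_k(0)$, and therefore
$$\mu\big((A+\epsilon B_2^n)\setminus A\big)\ls 2\epsilon\sum_{k=1}^n g_k(0).$$

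\textbf{Step 3 (bounding $g_k(0)$).} This is the only step where isotropicity enters. The marginal $g_k$ is the density of $\pi_{F_{e_k}}(\mu)$. By the marginal properties recalled in \S2.2 it is isotropic log-concave on $\mathbb{R}$, and by unconditionality it is even. Hence \eqref{eq:dim-12} (Hensley's bound) gives $g_k(0)=\|g_k\|_\infty\ls 1/\sqrt2$. Summing over $k$ yields $2\epsilon\cdot n/\sqrt2=\sqrt2 n\epsilon$, which is the first claim.

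\textbf{Step 4 (conclusion).} Dividing by $\epsilon$ and taking $\liminf_{\epsilon\to0^+}$ gives $\mu^+(\partial A)\ls\sqrt2 n$. Taking the supremum over convex bodies gives $\Gamma(\mu)\ls\sqrt2 n$.

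\textbf{Main obstacle.} I expect the only delicate point to be the justification inside Lemma~\ref{lem:fiber} for the intermediate sets $A_{k-1}$. These are convex but possibly unbounded or not closed, so their fibers are intervals with possibly infinite or open endpoints. Open endpoints change the fiber of $(B+\epsilon I_k)\setminus B$ only by a null set. At an infinite endpoint the corresponding piece of the difference is empty. So the bound $2\epsilon f(y,0)$ per fiber survives, and the continuity of $f$ ensures the Fubini computation is unproblematic. The rest is routine.
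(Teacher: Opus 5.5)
Your proposal is correct and follows the paper's proof essentially verbatim. Both proofs telescope via Lemma~\ref{lem:telescoping}, apply Lemma~\ref{lem:fiber} to the convex sets $A_{k-1}$, and bound $g_k(0)\ls 1/\sqrt{2}$ for the even isotropic marginals using \eqref{eq:dim-12}. Your remarks on open or unbounded fibers are harmless: the difference fibers stay inside the two endpoint intervals, and continuity of $f$ is not needed for the Fubini step.
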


\begin{proof}Fix a convex set $A\subset\mathbb{R}^n$ and $\epsilon>0$. Define $A_0,\dots,A_n$ as in Lemma~\ref{lem:telescoping}.
Since $A$ is convex and Minkowski sums preserve convexity, each $A_{k-1}$ is convex.
By Lemmas~\ref{lem:telescoping} and~\ref{lem:fiber},
$$\mu\big((A+\epsilon B_2^n)\setminus A\big)\ls \sum_{k=1}^n \mu\big((A_{k-1}+\epsilon I_k)\setminus A_{k-1}\big)\ls \sum_{k=1}^n 2\epsilon g_k(0)
=2\epsilon\sum_{k=1}^n g_k(0).$$
Each marginal density $g_k$ is log-concave and even on $\mathbb{R}$. 
By isotropicity, ${\rm Var}(X_k)=1$ for $X_k\sim g_k$. Applying \eqref{eq:dim-12} to each $g_k$ yields $g_k(0)\ls 1/\sqrt{2}$, and therefore
$$\mu\big((A+\epsilon B_2^n)\setminus A\big)\ls 2\epsilon n/\sqrt{2}=\sqrt{2}n\epsilon.$$
Dividing by $\epsilon$ and letting $\epsilon\to 0$ completes the proof.
\end{proof}

\medskip 

\noindent {\bf \S 5.4. Sharp one-dimensional bound.} Let $\mu$ be a log-concave probability measure on $\mathbb{R}$ with density $f$.
For an interval $I=[a,b]$ and $\epsilon>0$ we write
$$I+\epsilon B_2^1=[a-\epsilon,b+\epsilon],\qquad B_2^1=[-1,1],$$
and we define the perimeter of $I$ with respect to $\mu$ by
$$\mu^{+}(\partial I)
:=\liminf_{\epsilon\to0^+}\frac{\mu([a-\epsilon,b+\epsilon])-\mu([a,b])}{\epsilon}.$$
The maximal perimeter of $\mu$ is defined as
$$\Gamma(\mu):=\sup\big\{\mu^{+}(\partial I): I\subset\mathbb{R} \ \text{an interval}\big\}.$$
The next proposition provides an exact formula for $\Gamma(\mu)$ in one dimension.

\begin{proposition}\label{prop:Gamma_equals_2M}Let $\mu$ be a log-concave probability measure on $\mathbb{R}$ with density $f$, and set $M:=\|f\|_\infty$. Then
$$\Gamma(\mu)=2M.$$
\end{proposition}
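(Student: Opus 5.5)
We prove the two inequalities $\Gamma(\mu)\ls 2M$ and $\Gamma(\mu)\gr 2M$ separately. In both directions we use the elementary identity
$$\mu([a-\epsilon,b+\epsilon])-\mu([a,b])=\int_{a-\epsilon}^{a}f(t)\,dt+\int_{b}^{b+\epsilon}f(t)\,dt .$$

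\emph{Upper bound.} Fix an interval $I=[a,b]$. Since $f\ls M$ almost everywhere, each of the two integrals above is at most $\epsilon M$. Dividing by $\epsilon$ and taking the $\liminf$ gives $\mu^+(\partial I)\ls 2M$. Taking the supremum over $I$ yields $\Gamma(\mu)\ls 2M$. If one wants to allow unbounded intervals, only one boundary term survives and the bound is even better.

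\emph{Lower bound.} This direction rests on continuity properties of log-concave densities. A log-concave density is continuous on the interior of its support $(\alpha,\beta)$. After modifying $f$ on a null set (the endpoints), we may take $M=\sup_{(\alpha,\beta)}f$. This supremum is either attained at an interior point $x_0$, or approached at an endpoint, for instance when $f(x)\to M$ as $x\downarrow\alpha$, as for the one-sided exponential. Fix $\eta>0$ and pick points $a<b$ in $(\alpha,\beta)$ with $f(a),f(b)>M-\eta$. In the interior case this is possible by continuity near $x_0$, taking $a<x_0<b$ close to $x_0$. In the endpoint case we take $a<b$ both close to $\alpha$.

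Because $a$ and $b$ are interior points of the support and $f$ is continuous there, the difference quotient converges:
$$\frac{1}{\epsilon}\left(\int_{a-\epsilon}^{a}f+\int_b^{b+\epsilon}f\right)\to f(a)+f(b).$$
Hence $\mu^+(\partial[a,b])=f(a)+f(b)>2M-2\eta$. Letting $\eta\to0$ gives $\Gamma(\mu)\gr 2M$.

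The only delicate point is making sure the liminf is computed at points of continuity inside the support. Choosing $a<b$ strictly inside $(\alpha,\beta)$ avoids any issue with jumps at the endpoints, such as the jump at the boundary of a uniform or one-sided exponential density. This is also why $2M$ may only be approached rather than attained in the endpoint case.
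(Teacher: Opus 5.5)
Your proof is correct and follows essentially the same route as the paper: for the upper bound you bound each boundary contribution by $\epsilon M$, and for the lower bound you evaluate the perimeter of a small interval whose endpoints are points where $f$ is close to $M$ (the paper uses $[x-\delta,x+\delta]$ and lets $\delta\to 0^+$, while you use $[a,b]$ with $a,b$ near a near-maximizer). Your explicit choice of $a,b$ in the interior of the support, where $f$ is continuous, makes the limit computation slightly more careful than the paper's version, which tacitly needs $x$ to be an interior point.
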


\begin{proof}Fix an interval $I=[a,b]$. Then
$$\mu^+(\partial I)=f(a^-)+f(b^+)\ls 2\|f\|_\infty = 2M,$$
and hence $\Gamma(\mu)\ls 2M$.

Conversely, let $x$ be such that $f(x)$ is arbitrarily close to $M$. For $\delta>0$ sufficiently small, the interval 
$I_\delta=[x-\delta,x+\delta]$ is finite and satisfies
$$\mu^+(\partial I_\delta)=f((x-\delta)^-)+f((x+\delta)^+)\to 2f(x)\quad \text{as}\;\delta\to 0^+.$$
Thus $\Gamma(\mu)\gr 2f(x)$. Choosing $x$ so that $f(x)\to M$ we get $\Gamma(\mu)\gr 2M$.
\end{proof}

Proposition~\ref{prop:Gamma_equals_2M} implies a sharp bound for isotropic log-concave densities on $\mathbb{R}$.

\begin{theorem}\label{thm:1d_isotropic_sharp}
Let $\mu$ be an isotropic log-concave probability measure on $\mathbb{R}$ with density $f$. Then
$$\Gamma(\mu)\ls 2.$$
Moreover, the constant $2$ is sharp: there exist isotropic log-concave probability measures $\mu$ for which 
$\Gamma(\mu)$ is arbitrarily close to $2$. In fact, one-sided exponential distributions achieve equality in 
the underlying extremal inequality.
\end{theorem}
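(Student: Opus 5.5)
By Proposition~\ref{prop:Gamma_equals_2M}, $\Gamma(\mu)=2\|f\|_\infty$. The upper bound therefore reduces to showing $\|f\|_\infty\ls 1$ for every isotropic log-concave density on $\mathbb{R}$. This is already recorded in \eqref{eq:dim-1}: in dimension one, $L_f=\|f\|_\infty$ when $f$ is isotropic. Equivalently, the Fradelizi / Bobkov--Chistyakov inequality ${\rm Var}(X)\|f\|_\infty^2\ls 1$ holds for centered log-concave $f$, and ${\rm Var}(X)=1$ here. Hence $\Gamma(\mu)=2\|f\|_\infty\ls 2$.

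For sharpness, I would take the one-sided exponential $X=E-1$ with $E\sim{\rm Exp}(1)$. Its density is $f(x)=e^{-(x+1)}\mathds{1}_{[-1,\infty)}(x)$, which is log-concave with mean $0$, variance $1$ and $\|f\|_\infty=f(-1)=1$. So this $\mu$ is isotropic log-concave and equality holds in ${\rm Var}(X)\|f\|_\infty^2\ls 1$, which is the ``underlying extremal inequality''. Proposition~\ref{prop:Gamma_equals_2M} then gives $\Gamma(\mu)=2$.

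One point needs care. The supremum in Proposition~\ref{prop:Gamma_equals_2M} is approached by intervals $[x-\delta,x+\delta]$ with $x$ near the maximum point. For the one-sided exponential the maximum sits at the discontinuity $x=-1$. I would take $x=-1+\eta$ with $\eta\downarrow 0$: both endpoint densities $f((x-\delta)^-)$ and $f((x+\delta)^+)$ tend to $e^{-\eta}$, so the perimeters approach $2$. This gives $\Gamma(\mu)=2$ as a supremum that is at least approached, and in any case arbitrarily close to $2$.

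If one prefers an example with continuous densities, I would instead use a family of smooth log-concave densities converging to the exponential: for instance, convolve with a small Gaussian, or truncate suitably, and then renormalize to isotropic position. Since $\|f\|_\infty$ converges to $1$ along such a family, the values $\Gamma=2\|f\|_\infty$ tend to $2$.

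The main obstacle is only conceptual: making sure the bound $\|f\|_\infty\ls 1$ is used in its sharp form, with constant exactly $1$ and not merely an absolute constant. This is precisely the Fradelizi bound quoted in Section~\ref{section:2}, with equality for the exponential.
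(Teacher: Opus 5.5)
Your proposal is correct and matches the paper's proof: reduce to $\|f\|_\infty\ls 1$ via Proposition~\ref{prop:Gamma_equals_2M}, get this from ${\rm Var}(X)\|f\|_\infty^2\ls 1$ with ${\rm Var}(X)=1$, and use the shifted exponential density $e^{-(x+1)}\mathds{1}_{\{x\gr -1\}}$ for sharpness. Your extra care at the discontinuity (taking $x=-1+\eta$ with $\delta<\eta$) only makes explicit the paper's remark that the value $2$ is a supremum, approached by intervals shrinking to points where $f$ is close to $1$.
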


\begin{proof}
Let $M=\|f\|_\infty$. By Proposition~\ref{prop:Gamma_equals_2M}, it suffices to show that $M\ls 1$.
	
Let $X$ be a random variable with distribution $\mu$. For a log-concave density on $\mathbb{R}$ we have  
$${\rm Var}(X)\ls \frac{1}{M^2}.$$
Since $\mu$ is isotropic, ${\rm Var}(X)=1$, and hence $M\ls 1$. Therefore, 
$$\Gamma(\mu)=2M\ls 2.$$
To see that this bound is sharp, let $Y\sim\mathrm{Exp}(1)$ and define $X:=Y-1$. Then $\mathbb{E}(X)=0$ and ${\rm Var}(X)=1$. The
density of $X$ is given by
$$f(x)=e^{-(x+1)}\mathds{1}_{\{x\gr -1\}},$$ so $M=\|f\|_\infty=1$. Hence 
$\Gamma(\mu)=2M=2$, attained as a supremum by intervals shrinking to points where $f$ is close to~$1$.
\end{proof}

\medskip 

\noindent {\bf \S 5.5. $O(n)$ bound for product measures.} Combining the results of the previous two subsections, we now derive a linear-in-the-dimension 
upper bound for the maximal perimeter of log-concave product measures. We begin with the following general statement.

\begin{theorem}\label{thm:product-sharp}
Let $\mu=\mu_1\otimes\cdots\otimes\mu_n$ be a product probability measure on $\mathbb{R}^n$,
where each $\mu_k$ has a density $g_k\in L^\infty(\mathbb{R})$. Then, for every convex set $A\subset\mathbb{R}^n$,
\begin{equation}\label{eq:product-main}\mu^{+}(\partial A)\ \ls\ 2\sum_{k=1}^n \|g_k\|_\infty.\end{equation}
Consequently,
$$\Gamma(\mu)\ \ls\ 2\sum_{k=1}^n \|g_k\|_\infty.$$
Moreover, the constant $2$ in \eqref{eq:product-main} is optimal.
\end{theorem}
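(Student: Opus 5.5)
The plan is to repeat the telescoping argument of \S 5.3, replacing the unconditional fiber estimate (Lemma~\ref{lem:fiber}) with a product-measure version that uses $\|g_k\|_\infty$.

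Fix a convex set $A$ and $\epsilon>0$, and form $A_0=A$, $A_k=A_{k-1}+\epsilon I_k$. By Lemma~\ref{lem:telescoping}, whose proof uses only $B_2^n\subset B_\infty^n=I_1+\cdots+I_n$ and not the symmetry of $\mu$,
$$\mu\big((A+\epsilon B_2^n)\setminus A\big)\ls\sum_{k=1}^n\mu\big((A_{k-1}+\epsilon I_k)\setminus A_{k-1}\big),$$
and each $A_{k-1}$ is convex. It then suffices to show that for every convex $B$ and every $k$,
$$\mu\big((B+\epsilon I_k)\setminus B\big)\ls 2\epsilon\|g_k\|_\infty.$$
To prove this, write $x=(y,t)$ with $t=x_k$. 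The density of $\mu$ factors as $h(y)g_k(t)$, where $h$ is the density of the product of the remaining $\mu_j$. Each fiber $B_y$ is an interval $[a(y),b(y)]$, so $\big((B+\epsilon I_k)\setminus B\big)_y$ is contained in two intervals of length $\epsilon$. By Fubini, the inner integral is at most $2\epsilon\|g_k\|_\infty\,h(y)$, and integrating over $y$ (using $\int h=1$) gives the bound. Two edge cases need care. Empty fibers of $B$ contribute nothing, since $(B+\epsilon I_k)_y$ is empty when $B_y$ is, because adding $I_k$ does not change the $y$-coordinate. Half-infinite fibers are handled the same way with one endpoint. Note that no log-concavity is used; we only need $g_k\in L^\infty$.

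Summing over $k$, dividing by $\epsilon$, and taking $\liminf$ gives \eqref{eq:product-main}. Taking the supremum over $A$ bounds $\Gamma(\mu)$. If each $\mu_k$ is isotropic log-concave, Theorem~\ref{thm:1d_isotropic_sharp} gives $\|g_k\|_\infty\ls1$, hence $\Gamma(\mu)\ls 2n$.

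The main obstacle is optimality of the constant $2$, which requires a construction. The plan is to take $n=1$ and invoke Proposition~\ref{prop:Gamma_equals_2M}, which gives $\Gamma(\mu)=2\|g_1\|_\infty$ exactly, so the constant cannot be lowered. If optimality is meant in every dimension, I would try $n$ factors with densities $g_k$ each nearly constant near a point $x^{(k)}$, for instance uniform densities, and take $A$ to be a small cube centered at $(x^{(1)},\dots,x^{(n)})$. The perimeter of this cube is then about $\prod_j g_j(x^{(j)})\cdot 2n\delta^{n-1}$, which is small, not of the target order $2\sum_k\|g_k\|_\infty$. So the genuinely hard part is that cubes concentrate mass poorly. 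I would instead aim for slabs $\{a\ls x_k\ls b\}$, whose perimeter is exactly $g_k(a)+g_k(b)$, approaching $2\|g_k\|_\infty$ for a single coordinate. This shows the constant $2$ is attained in the sense that the inequality becomes an equality when only one $\|g_k\|_\infty$ dominates, for example when the other factors have tiny sup-norm, i.e. are very spread out. That settles sharpness of the constant.
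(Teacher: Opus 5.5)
Your proof is correct and follows the paper's argument: the telescoping Lemma~\ref{lem:telescoping} along the segments $I_k$, then the Fubini fiber estimate $\mu\big((B+\epsilon I_k)\setminus B\big)\ls 2\epsilon\|g_k\|_\infty$ from the product structure, summed over $k$. The paper's proof does not spell out the optimality claim, and your $n=1$ argument via Proposition~\ref{prop:Gamma_equals_2M} (or your slab construction) validly fills that in; a more direct example in every dimension is the uniform measure on $[0,1]^n$ with $A=[\delta,1-\delta]^n$, whose perimeter $2n(1-2\delta)^{n-1}$ tends to $2n=2\sum_k\|g_k\|_\infty$ as $\delta\to 0^+$.
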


\begin{proof}As in the proof of Lemma~\ref{lem:telescoping}, for $k=1,\dots,n$ let $I_k:= [-e_k,e_k]$. Given a  
convex set $A\subset\mathbb{R}^n$ and $\epsilon>0$, define
$$A_0:=A,\qquad A_k:=A_{k-1}+\epsilon I_k\qquad(k=1,\dots,n).$$
By Fubini's theorem, Lemma~\ref{lem:telescoping}, and the product structure of $\mu$, we obtain 
\begin{align*}
\mu(A_k)-\mu(A_{k-1})
&=\int_{\mathbb{R}^{n-1}}\Big(\mu_k\big((A_{k-1})_y+[-\epsilon,\epsilon]\big)-\mu_k\big((A_{k-1})_y\big)\Big)\,d\mu_{\widehat{k}}(y),
\end{align*}
where $\mu_k$ is the $k$-th marginal and $\mu_{\widehat{k}}:=\bigotimes_{i\neq k}\mu_i$.

Let $J\subset\mathbb{R}$ be an interval. The set $J+[-\epsilon,\epsilon]$ differs from $J$ only near its endpoints.
Therefore,
$$\mu_k\big((J+[-\epsilon,\epsilon])\setminus J\big)\ls 2\epsilon\,\|g_k\|_\infty,$$
since there are at most two endpoint neighborhoods, each of length $\epsilon$,
and the density is bounded by $\|g_k\|_\infty$. Hence,
$$\mu(A_k)-\mu(A_{k-1})\ls 2\epsilon\,\|g_k\|_\infty.$$
Summing over $k$ and using~\eqref{eq:telescope}, we obtain 
$$\mu\big((A+\epsilon B_2^n)\setminus A\big)\ls 2\epsilon \sum_{k=1}^n \|g_k\|_\infty.$$
Dividing by $\epsilon$ and letting $\epsilon\to 0^+$ yields \eqref{eq:product-main}.
\end{proof}

\begin{corollary}\label{cor:isotropic-product}
Let $\mu=\mu_1\otimes\cdots\otimes\mu_n$ be a product probability measure on $\mathbb{R}^n$, where each $\mu_k$ is a log-concave probability measure on $\mathbb{R}$ 
with density $g_k$ and ${\rm Var}(X_k)=1$, where $X_k\sim\mu_k$. Then
$$\Gamma(\mu)\ls 2n.$$
\end{corollary}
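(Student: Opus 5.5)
The plan is to reduce Corollary~\ref{cor:isotropic-product} directly to Theorem~\ref{thm:product-sharp}. That theorem already gives
$$\Gamma(\mu)\ls 2\sum_{k=1}^n\|g_k\|_\infty$$
for any product of probability measures on $\mathbb{R}$ with bounded densities. So it suffices to check two things for each factor $\mu_k$: that its density $g_k$ lies in $L^\infty(\mathbb{R})$, and that $\|g_k\|_\infty\ls 1$.

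Both facts come from the one-dimensional inequality recalled in \S2.2. Fradelizi's bound, and in sharper form the Bobkov--Chistyakov bound ${\rm Var}(X)\|g\|_\infty^2\ls 1$, holds for every log-concave density $g$ on $\mathbb{R}$. Neither requires centering, because the right-hand inequality in \cite[Proposition~2.1]{Bobkov-Chistyakov-2015} is stated for arbitrary log-concave densities. This matters here, since the corollary assumes only ${\rm Var}(X_k)=1$ and not $\mathbb{E}(X_k)=0$.

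The steps are then as follows.

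\begin{enumerate}
\item Since ${\rm Var}(X_k)=1>0$, the measure $\mu_k$ is not a point mass. Hence it has a log-concave density $g_k$, as the statement assumes.
\item A log-concave density on $\mathbb{R}$ is bounded: by \eqref{eq:A-B} it is dominated by $Ae^{-B|x|}$. Therefore $g_k\in L^\infty(\mathbb{R})$.
\item Apply ${\rm Var}(X_k)\|g_k\|_\infty^2\ls 1$ with ${\rm Var}(X_k)=1$ to get $\|g_k\|_\infty\ls 1$. This is exactly the argument used in the proof of Theorem~\ref{thm:1d_isotropic_sharp}.
\item Substitute into Theorem~\ref{thm:product-sharp}. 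This gives $\mu^+(\partial A)\ls 2n$ for every convex set $A$, and taking the supremum over convex bodies gives $\Gamma(\mu)\ls 2n$.
\end{enumerate}

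The only step that needs any care is the third. One has to make sure the variance--sup-norm inequality is applied in a form that does not assume a centered density, since the hypothesis gives only unit variance. If one preferred to quote only Fradelizi's centered version, there is an easy fallback. Translating a factor $\mu_k$ changes neither its variance nor $\|g_k\|_\infty$, so we may replace each $\mu_k$ by its centered translate. This turns $\mu$ into a translate of itself. Since $\mu^+(\partial A)$ transforms covariantly under translation of both the measure and $A$, $\Gamma(\mu)$ is unchanged. With either route, the rest of the argument is a direct substitution.
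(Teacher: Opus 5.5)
Your proposal is correct and is the paper's argument: apply Theorem~\ref{thm:product-sharp} and bound each $\|g_k\|_\infty\ls 1$ using the one-dimensional variance--sup-norm inequality. The paper cites \eqref{eq:dim-1}, which is stated for isotropic (centered) marginals, whereas the corollary assumes only unit variance; your remark that centering is unnecessary is therefore worthwhile extra care, and it holds because both ${\rm Var}(X_k)$ and $\|g_k\|_\infty$ are translation invariant.
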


In view of Theorem~\ref{thm:product-sharp}, the corollary follows immediately from the bound $\|g_k\|_\infty\ls 1$, 
which holds by \eqref{eq:dim-1}.

\bigskip

\noindent {\bf Acknowledgement.} The third and fourth named authors acknowledge support by PhD scholarships from the National Technical University of Athens.

\bigskip

\footnotesize
\bibliographystyle{amsplain}

\bigskip

\thanks{\noindent {\bf Keywords:} Maximal perimeter; Convex bodies; Surface area; Isotropic log-concave probability measures.}

\smallskip

\thanks{\noindent {\bf 2020 MSC:} Primary 52A38; Secondary 52A40, 60D05, 46B06.}

\bigskip

\bigskip 

\medskip 

\noindent \textsc{Silouanos \ Brazitikos}: Department of Mathematics \& Applied Mathematics, University of Crete, Voutes Campus, 70013 Heraklion, Greece.

\smallskip

\noindent \textit{E-mail:} \texttt{silouanb@uoc.gr}

\bigskip

\noindent \textsc{Apostolos \ Giannopoulos}: School of Applied Mathematical and Physical Sciences, National Technical University of Athens, Department of Mathematics, Zografou Campus, GR-157 80, Athens, Greece.

\smallskip

\noindent \textit{E-mail:} \texttt{apgiannop@math.ntua.gr}

\bigskip

\noindent \textsc{Antonios \ Hmadi}: School of Applied Mathematical and Physical Sciences, National Technical University of Athens, Department of Mathematics, Zografou Campus, GR-157 80, Athens, Greece.

\smallskip

\noindent \textit{E-mail:} \texttt{ahmadi@mail.ntua.gr}

\bigskip

\noindent \textsc{Natalia \ Tziotziou}: School of Applied Mathematical and Physical Sciences, National Technical University of Athens, Department of Mathematics, Zografou Campus, GR-157 80, Athens, Greece.

\smallskip

\noindent \textit{E-mail:} \texttt{nataliatz99@gmail.com}

\end{document}